\titleformat*{\section}{\normalsize\bfseries\centering}
\titleformat*{\subsection}{\normalsize\itshape}
\newtheoremstyle{fact}
     {\topsep}
     {\topsep}
     {\slshape}
     {}
     {\bfseries}
     {}
     { }
     {\thmname{#1}\thmnumber{ #2.}\thmnote{ \rm (#3)}}
\newtheoremstyle{mylabel} 
        {\topsep}
        {\topsep}
        {\itshape}
        {}
        {\bfseries}
        {}
        { }
        {\thmname{#1}\thmnote{ #3}.} 
\newtheorem{theorem}{Theorem}[section]
\newtheorem{Ltheorem}{Theorem}
\newtheorem*{theorem*}{Theorem} 
\newtheorem{lemma}[theorem]{Lemma}
\newtheorem{proposition}[theorem]{Proposition}
\newtheorem{corollary}[theorem]{Corollary}
\theoremstyle{definition}
\newtheorem{definition}[theorem]{Definition}
\newtheorem*{remark*}{Remark}
\newtheorem*{question*}{Question}
\newtheorem*{examples*}{Examples}  
\newtheorem*{example*}{Example}
\theoremstyle{mylabel}
\newtheorem*{Ltheorem*}{Theorem}
\theoremstyle{fact}
\def\proofont{\fontseries{bx}\fontshape{sc}\selectfont}
\def\proofname{Proof. }
\newcommand{\homeo}{\operatorname{Homeo}}
\newcommand{\supp}{\operatorname{supp}}
\renewenvironment{proof}[1][\proofname]{\par
  \normalfont
  \topsep6\p@\@plus6\p@ \trivlist
  \item[\hskip\labelsep\noindent\proofont #1]\ignorespaces
}{%
  \qed\endtrivlist
}
\def\@fnsymbol#1{\ifcase#1\or * \or 1 \or 2  \else\@ctrerr\fi\relax}
\let\mytitle\@title
\author{Rafael Dahmen and G\'abor Luk\'acs}
\title{Long colimits of topological groups IV: Spaces with socks\thanks{2020 Mathematics Subject Classification: 
Primary 
22A05; 
Secondary
22F50; 
54C15. 
}}
\begin{document}

\makeatletter
\let\mytitle\@title
\chead{\small\itshape R. Dahmen and G. Luk\'acs / Long colimits of topological groups IV}
\fancyhead[RO,LE]{\small \thepage}
\makeatother

\maketitle

\def\thanks#1{}

\thispagestyle{empty}

\begin{abstract} 
The group of compactly supported homeomorphisms on a Tychonoff  space can be topologized
in a number of ways, including as a colimit of homeomorphism groups 
with a given compact support, or as a subgroup of the homeomorphism group of its Stone-\v{C}ech compactification. A space is said to have the {\itshape Compactly Supported Homeomorphism Property} ({\itshape CSHP})
if these two topologies coincide. The authors develop techniques for showing
that products of certain spaces with CSHP, such as the Closed Long Ray and the Long Line,
have CSHP again.
\end{abstract}

\section{Introduction}

Given a compact space $K$, it is well known that the homeomorphism 
group $\homeo(K)$ is a topological group with the compact-open topology (\cite{Arens}).
If $X$ is assumed to be only Tychonoff, then 
for every compact subset $K\subseteq X$,
the group $\homeo_K(X)$ of homeomorphisms supported in $K$ (i.e., identity on $X\backslash K$) is  a topological
group with the compact-open topology; however,
the full homeomorphism group $\homeo(X)$ equipped
with the compact-open topology need not be a topological group (\cite{dijkstra}).
Nevertheless, $\homeo(X)$ can be turned into
a topological group by embedding it into $\homeo(\beta X)$, 
the homeomorphism group of the Stone-\v{C}ech compactification of $X$. 
The latter topology has also been studied under the name of {\itshape zero-cozero topology}
(\cite{osipov2020}, \cite{diConcilio}).

For a Tychonoff space $X$, let $\mathscr{K}(X)$ denote the poset of compact subsets of $X$
ordered by inclusion.
In light of the foregoing, 
the group $\homeo_{cpt}(X)\coloneqq\bigcup\limits_{K\in \mathscr{K}(X)}\homeo_K(X)$
of the compactly supported homeomorphisms of $X$ admits seemingly different topologies:

\begin{enumerate}

\item 
the finest topology making all inclusions $\homeo_K (X) \to \homeo_{cpt}(X)$ continuous
(i.e., the colimit in the category of topological spaces and continuous maps); and

\item
the topology induced by $\homeo(\beta X)$.

\end{enumerate}

A space $X$ is said to have the {\itshape Compactly Supported Homeomorphism Property} 
({\itshape CSHP}) if these topologies coincide (\cite{RDGL1}, \cite{RDGL3}). In a previous
work, the authors gave a complete characterization of a finite product
of ordinals having CSHP (\cite[Theorem~A]{RDGL3}). Having CSHP is not productive, though.
For example, while $\omega_1$ and $\omega_2$ equipped with the order
topology have CSHP (\cite[Theorem~D(c)]{RDGL1}), their product $\omega_1 \times \omega_2$ does not
(\cite[Example~1]{RDGL3}).

In this paper, we develop techniques  allowing one to show that products of
certain spaces with CSHP have CSHP again. As an application, we prove the following result.

\begin{Ltheorem} \label{main:thm:product}
Let $C$ be a compact metrizable space, let $\mathbb{L}_{\geq 0}$ denote the Closed Long Ray,
and $\mathbb{L}$ denote the Long Line. For every $n,m,l<\omega$,
the product space 
$\mathbb{L}^n \times (\mathbb{L}_{\geq 0})^m \times (\omega_1)^l \times C$
has CSHP.  
\end{Ltheorem}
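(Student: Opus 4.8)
The plan is to deduce Theorem~\ref{main:thm:product} from a productivity criterion: once we know that each factor has CSHP and carries the additional structure advertised in the title (a system of \emph{socks}), and that this structure is inherited by finite products and by multiplication with a compact metrizable space, an induction on the number of factors reduces the theorem to the three building blocks $\mathbb{L}$, $\mathbb{L}_{\geq 0}$, $\omega_1$ together with the factor $C$. I would first isolate the one containment that always holds: the colimit topology on $\homeo_{cpt}(X)$ is finer than the topology induced from $\homeo(\beta X)$, because each inclusion $\homeo_K(X)\to\homeo(\beta X)$ is a topological embedding whose domain carries the compact-open topology, so the $\beta X$-topology is one of the topologies making all inclusions continuous and is therefore coarser than the colimit. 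CSHP is thus exactly the reverse assertion, and since we work with topological groups it suffices to show that every colimit-neighborhood of the identity already contains a $\beta X$-neighborhood of the identity. This is the containment the sock machinery is designed to supply.

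Next I would dispatch the base cases. For $C$ compact metrizable the poset $\mathscr{K}(C)$ has a largest element, so the colimit collapses to $\homeo(C)=\homeo(\beta C)$ and CSHP is automatic; I would record that $C$ trivially admits socks. For $\omega_1$ CSHP is \cite[Theorem~D(c)]{RDGL1}; for the Closed Long Ray and the Long Line I would establish CSHP directly, exploiting that every compact subset lies in a bounded initial segment $[0,\alpha]$ (respectively a bounded interval), so that $\mathscr{K}$ is cofinally indexed by $\omega_1$ and $\beta X$ is obtained by adjoining a single point at each end. The genuine content of the base cases is to produce the sock structure on each long factor: a coherent family of self-homeomorphisms, supported near the end and parametrized by $\alpha<\omega_1$, that can displace any prescribed bounded set off a prescribed compact set while leaving a second prescribed compact set fixed. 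The common cofinality $\omega_1$ of all the long directions is what makes this synchronization possible and is precisely the feature absent in $\omega_1\times\omega_2$.

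The heart of the argument is the inductive step: assuming $X$ and $Y$ have CSHP and carry socks, show that $X\times Y$ has CSHP and again carries socks. Given a colimit-open neighborhood of the identity in $\homeo_{cpt}(X\times Y)$, I would use the factorwise socks to factor an arbitrary compactly supported homeomorphism of the product through controlled maps whose supports are confined to product boxes $K\times L$ and can be slid into standard position; transporting neighborhoods across this factorization converts the colimit data into a $\beta(X\times Y)$-neighborhood. Here one uses that the ends of $X\times Y$ are governed by the ends of the factors in a product fashion, so that displacing supports on each factor independently suffices to realize the required $\beta$-control. Finally, I would check that the product sock, assembled from the factorwise socks and driven by a single common parameter $\alpha<\omega_1$, again satisfies the sock axioms, closing the induction.

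The main obstacle I anticipate is exactly the step where supports must be separated between the $X$- and $Y$-directions. A homeomorphism supported in a compact subset of $X\times Y$ need not respect the product structure: it can shear the two coordinates against each other, and the $\omega_1\times\omega_2$ counterexample shows that without a matching of cofinalities this shearing is uncontrollable. The sock condition must therefore be strong enough to realize the sliding simultaneously in all long directions with a single ordinal parameter, yet weak enough to hold for $\mathbb{L}$, $\mathbb{L}_{\geq 0}$, $\omega_1$ and to survive both finite products and multiplication by a compact metrizable factor. Verifying this balance — that the synchronized sock is preserved under products and genuinely tames the cross-coordinate shearing — is where I expect the bulk of the technical work to lie.
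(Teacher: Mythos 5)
There is a genuine gap, and it sits exactly where you predicted: the ``heart of the argument'' in your third paragraph --- factoring an arbitrary compactly supported homeomorphism of $X\times Y$ through maps supported in product boxes and ``sliding'' supports into standard position --- is never actually carried out, and you concede that taming the cross-coordinate shearing is open. In the paper this step simply does not exist, because no homeomorphism of the product is ever decomposed. Your notion of sock is also not the paper's: you posit a synchronized family of \emph{self-homeomorphisms} ``that can displace any prescribed bounded set off a prescribed compact set,'' whereas the paper's $\kappa$-sock is a family of \emph{retractions} $\{p_\alpha\}_{\alpha\in\mathbb{I}}$ onto compact sets $K_\alpha=p_\alpha(X)$, indexed by a $\kappa$-long $\omega$-cpo, cofinal in $\mathscr{K}(X)$, compatible ($p_\beta p_\alpha=p_\alpha$ for $\alpha\leq\beta$), and continuous at suprema of countable chains. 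Retractions are trivially productive --- one takes $p_\alpha=\prod_j p^{(j)}_{\alpha^{(j)}}$ coordinatewise (Proposition~\ref{basic:prop:sock-prod}(b)) --- so the shearing problem you identify as the bulk of the work never arises; it is an artifact of your displacement formulation. Likewise there is no induction on the number of factors: the product is handled in one shot by the general Theorem~\ref{main:thm:sock}, and CSHP for the individual factors $\mathbb{L}$, $\mathbb{L}_{\geq 0}$, $\omega_1$ (which you propose to establish separately as base cases) falls out of the same theorem rather than feeding into it.

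The missing engine in your proposal is a \emph{tightness} argument, which your sketch does not mention at all. The paper reduces CSHP to three conditions from the earlier work \pcite{2.3}{RDGL1}: $\mathscr{K}(X)$ is $\kappa$-long, the inclusions $\homeo_K(X)\to\homeo_{cpt}(X)$ are embeddings, and $\homeo_{cpt}(X)$ with the topology induced by $\homeo(\beta X)$ is $\kappa$-tight. The tightness is then proved using the sock together with the hypothesis $kw(X)\leq\kappa$ (a hypothesis absent from your proposal, supplied for the spaces of Theorem~\ref{main:thm:product} by Corollary~\ref{basic:cor:long} with $C$ metrizable): given $A$ with $\operatorname{id}_X\in\overline{A}$, one runs an $\omega$-step catching-up construction producing sets $D_m\subseteq A$ with $|D_m|\leq\kappa$ and indices $\alpha_m\in\mathbb{I}$ such that supports of elements of $D_{m-1}$ lie in $K_{\alpha_m}$ and $\beta p_{\alpha_m}\in\overline{(\beta p_{\alpha_m})^*(D_m)}$, the cardinality bound coming from $t(\mathscr{C}(\beta X,K_{\alpha_m}))\leq w(K_{\alpha_m})\leq kw(X)\leq\kappa$ (Lemma~\ref{socks:lemma:tightness}); setting $\gamma=\sup\alpha_m$, condition (S4) gives uniform convergence $p_{\alpha_m}\to p_\gamma$ on $K_\gamma$, and a three-$\varepsilon$ estimate shows $\operatorname{id}_X\in\overline{\bigcup_m D_m}$. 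Your opening observation (colimit topology finer than the induced one, so CSHP is the reverse containment checked at the identity) is correct, and your diagnosis that the common cofinality $\omega_1$ is what fails for $\omega_1\times\omega_2$ matches the paper's motivation; but without the tightness reduction, or some concrete substitute, your plan for converting colimit neighborhoods into $\beta$-neighborhoods has no mechanism, and the step you flag as hard is, on the paper's route, not merely hard but unnecessary.
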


For the definition of $\mathbb{L}_{\geq 0}$ and $\mathbb{L}$, see Section~\ref{sect:basic}.
The proof of Theorem~\ref{main:thm:product} requires introducing the notion 
of a space having a {\itshape $\kappa$-sock},
which is productive (Proposition~\ref{basic:prop:sock-prod}(b)), and under suitable conditions, 
implies CSHP. To that end, we recall some terminology and notation.

Given a  cardinal $\kappa$, a directed set $(\mathbb{I},\leq)$ is {\em $\kappa$-long}  if every subset $C\subseteq \mathbb{I}$ with $|C| \leq \kappa$ has an upper bound in $\mathbb{I}$. One says that $\mathbb{I}$ is {\em long} if it is $\aleph_0$-long.  Recall that a poset $(\mathbb{I},\leq)$
is an {\em $\omega$-cpo} if every non-decreasing sequence $\{\alpha_k\}_{k < \omega}$
in $\mathbb{I}$ has a supremum. 

All spaces in this paper are assumed to be Tychonoff. We denote by
$\mathscr{C}(X,Y)$ the space of continuous functions from a space $X$ to a space $Y$,
equipped with the compact-open topology.

\begin{definition}
Let $\kappa$ be an infinite cardinal. A {\itshape $\kappa$-sock} on a space $X$
is a family $\{p_\alpha\}_{\alpha \in \mathbb{I}}$
of retracts on $X$, indexed by a $\kappa$-long $\omega$-cpo $(\mathbb{I},\leq)$ satisfying
the following properties:

\begin{enumerate}[label=(S\arabic*),labelwidth=0.4in,leftmargin=0.4in]

\item 
$K_\alpha \coloneqq p_\alpha(X)$ is compact
for every $\alpha \in \mathbb{I}$;

\item
for every $\alpha,\beta \in \mathbb{I}$ such that $\alpha \leq \beta$, one has
$p_\beta p_\alpha = p_\alpha$ (or equivalently, $K_\alpha \subseteq K_\beta$);

\item
$\{K_\alpha\}_{\alpha \in \mathbb{I}}$ is cofinal
in the family $\mathscr{K}(X)$ of compact subsets
of $X$ (ordered by inclusion);

\item
for every non-decreasing sequence $\{\alpha_k\}_{k <\omega}$ 
with $\gamma = \sup\limits_{k<\omega} \alpha_k$ in $\mathbb{I}$, one has
$\lim p_{\alpha_k} = p_{\gamma}$
in $\mathscr{C}(X,K_{\gamma})$.

\end{enumerate}

\noindent
In the special case where $\kappa=\aleph_0$, we say that $\{p_\alpha\}_{\alpha \in \mathbb{I}}$
is a {\itshape sock}. Note that if $\kappa_1 \geq \kappa_2$ are infinite cardinals, then
every $\kappa_1$-sock is also a $\kappa_2$-sock. Furthermore, if $X$ has a sock, then
$(\mathscr{K}(X),\subseteq)$ is long, and in particular, every countable subset of $X$
is contained in a compact subset, and so $X$ is pseudocompact.
\end{definition}

Recall that the \emph{tightness} $t(X,\mathscr{S})$ of a topological space $(X,\mathscr{S})$ is the smallest cardinal $\kappa$ such that every point $p$ in the closure of a subset $A\subseteq X$ is in the closure of a subset $C\subseteq A$ with $|C| \leq \kappa$. One says that a space is \emph{$\kappa$-tight} if $t(X,\mathscr{S})\leq\kappa$. Recall further that the {\itshape compact weight}
of a space $X$ is $kw(X)\coloneqq \sup \{ w(K) \mid K \in \mathscr{K}(X)\}$, where
$w(K)$ is the weight of $K$
(\cite{hernandezEtAl2014}, \cite{hernandezEtAl2020}). (Following \cite[1.7.12]{Engel},
if $K$ is finite, one puts $kw(K)=w(K)=\aleph_0$.)

\begin{Ltheorem} \label{main:thm:sock}
Let $\kappa$ be an infinite cardinal. Suppose that $X$ is a locally
compact space that has a $\kappa$-sock  and $kw(X) \leq \kappa$. Then $X$ has CSHP and
$\homeo_{cpt}(X)$ is $\kappa$-tight.
\end{Ltheorem}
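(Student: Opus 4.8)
The plan is to prove the two assertions together, with the comparison of topologies resting on a single ``capture'' lemma that the compact-weight bound and the $\omega$-cpo structure are designed to supply. Write $\tau_{\mathrm{col}}$ for the colimit topology and $\tau_\beta$ for the topology induced from $\homeo(\beta X)$. As established in the earlier parts of the series, $\tau_\beta$ already makes every inclusion $\homeo_{K_\alpha}(X)\to\homeo_{cpt}(X)$ continuous, so $\tau_\beta\subseteq\tau_{\mathrm{col}}$ automatically and CSHP is equivalent to the reverse inclusion $\tau_{\mathrm{col}}\subseteq\tau_\beta$. Both topologies are homogeneous: every element lies in some stage $\homeo_{K_\alpha}(X)$, left translation by it carries each stage homeomorphically into a larger stage, and hence is a homeomorphism for $\tau_{\mathrm{col}}$ (by the universal property) as well as for the group topology $\tau_\beta$. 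It therefore suffices to show that every $\tau_{\mathrm{col}}$-open set $U\ni e$ is a $\tau_\beta$-neighborhood of $e$.

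The combinatorial engine is the following \textbf{capture principle}, which I would isolate first. Because $\{K_\alpha\}_{\alpha\in\mathbb I}$ is cofinal in $\mathscr K(X)$ by (S3) and $\mathbb I$ is $\kappa$-long, any subfamily of $\homeo_{cpt}(X)$ of cardinality at most $\kappa$ lies in a single stage $\homeo_{K_\alpha}(X)$: collect the at-most-$\kappa$ supports, dominate each by some $K_\beta$ via (S3), and bound the resulting at-most-$\kappa$ indices by one $\alpha$. The real work is to upgrade this to \emph{nets}: I would prove that if $(h_i)$ is a net with $h_i\to e$ in $\tau_\beta$, then a cofinal part of it lies in one stage $\homeo_{K_\alpha}(X)$ and converges to $e$ there in the compact--open topology, whence $h_i\to e$ in $\tau_{\mathrm{col}}$. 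Granting this, $U$ must be a $\tau_\beta$-neighborhood of $e$: otherwise, selecting $h_V\in V\setminus U$ for the basic $\tau_\beta$-neighborhoods $V$ of $e$ produces a $\tau_\beta$-null net that avoids the $\tau_{\mathrm{col}}$-open set $U$, contradicting $\tau_{\mathrm{col}}$-convergence.

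Proving the net version of the capture principle is where I expect the main obstacle to lie, and it is exactly what the remaining hypotheses are for. Since $\beta X$ is compact, $\tau_\beta$-convergence $h_i\to e$ is \emph{uniform} convergence of the extensions to $\mathrm{id}_{\beta X}$. Suppose toward a contradiction that the supports escape every $K_\alpha$; using (S3) and the $\omega$-cpo property I would extract a nondecreasing sequence $\alpha_0\le\alpha_1\le\cdots$ with $\gamma=\sup_k\alpha_k$ and elements $h_n$ whose supports meet $K_{\alpha_{n+1}}\setminus K_{\alpha_n}$, so that each $h_n$ has a genuine displacement point $x_n\notin K_{\alpha_n}$. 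Local compactness lets me follow these $x_n$, which accumulate at some $\xi\in\beta X\setminus X$. The retractions then supply the quantitative obstruction: by (S4) one has $p_{\alpha_k}\to p_\gamma$ in $\mathscr C(X,K_\gamma)$, and evaluating this convergence against the escaping displacement points keeps the moves of $h_n$ bounded away from the identity on $\beta X$, contradicting the uniform convergence $h_n\to e$. Hence no escape occurs, the supports are eventually bounded by a single $K_\alpha$, and the net converges there; this is the delicate heart of the argument.

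Finally, for $\kappa$-tightness I would first record that $kw(X)\le\kappa$ forces each stage to be $\kappa$-tight: $K_\alpha$ is compact of weight at most $\kappa$, so $\homeo_{K_\alpha}(X)$ embeds in $\mathscr C(K_\alpha,K_\alpha)$ with the compact--open topology, which has weight at most $\kappa$, hence tightness at most $\kappa$. Now let $g\in\overline A$ in the common topology of $\homeo_{cpt}(X)$, and pick a net in $A$ converging to $g$; translating by $g^{-1}$ (a homeomorphism, by the first paragraph) gives a $\tau_\beta$-null net, to which the capture principle applies. Thus the convergent part lies in a single stage, so after translating back $g\in\overline{A\cap\homeo_{K_{\alpha}}(X)}$ for some $\alpha$; as that stage is $\kappa$-tight and tightness is hereditary, there is $C\subseteq A\cap\homeo_{K_\alpha}(X)$ with $|C|\le\kappa$ and $g\in\overline C$, giving $t(\homeo_{cpt}(X))\le\kappa$. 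The only subtle point remains the localization of convergence to a stage, i.e. the capture principle; and the matching of the threshold in ``$\kappa$-long'' (so that at-most-$\kappa$ elements are bounded) with ``$kw\le\kappa$'' (so that the stages are $\kappa$-tight) is precisely what makes $\kappa$ the right cardinal in both conclusions.
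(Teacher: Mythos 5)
Your entire argument—both the CSHP half and the tightness half—routes through the ``capture principle'' for nets, and that principle is false under the very hypotheses of the theorem. Take $X=\omega_1$ (which has a sock and $kw(X)=\aleph_0$, so $\kappa=\aleph_0$ applies), and for each $\alpha<\omega_1$ let $h_\alpha$ be the transposition of the isolated points $\alpha+1$ and $\alpha+2$; each $h_\alpha$ is a compactly supported homeomorphism. The net $(h_\alpha)_{\alpha<\omega_1}$ converges to $\operatorname{id}$ in $\homeo(\beta\omega_1)=\homeo(\omega_1+1)$: given any entourage $U$ of the diagonal of the compact space $\omega_1+1$, the pairs $(\alpha+1,\alpha+2)$ must eventually lie in $U$, since otherwise cofinally many such pairs would avoid $U$ while accumulating at the diagonal point $(\omega_1,\omega_1)$, contradicting that $U$ is a neighborhood of the diagonal. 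Yet no cofinal subnet of $(h_\alpha)$ has supports contained in a single compact set, since the supports are cofinal in $\omega_1$. This also refutes the mechanism you propose for proving capture: displacement points escaping to $\xi\in\beta X\setminus X$ do \emph{not} keep the moves bounded away from the identity in $\beta X$—small displacements far out become uniformly small, exactly as these swaps do. So convergence in $\tau_\beta$ (and indeed in the colimit topology, which is not determined by stage-wise convergence) simply cannot be localized to one stage, and with capture gone, both your derivation of $\tau_{\mathrm{col}}\subseteq\tau_\beta$ and your claim that $g\in\overline A$ forces $g\in\overline{A\cap\homeo_{K_\alpha}(X)}$ for a single $\alpha$ are unsupported.

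The paper's proof avoids localization entirely, and reverses your logical order: it proves $\kappa$-tightness \emph{first}, directly, and then obtains CSHP by citing the general criterion \cite[2.3]{RDGL1}, which says that a $\kappa$-long cofinal family of embedded subgroups whose union is $\kappa$-tight has the colimit topology. For tightness, instead of confining the closure witness to one stage, it builds a set $D=\bigcup_m D_m$ of size $\leq\kappa$ spread along a countable increasing sequence of stages $\alpha_0\leq\alpha_1\leq\cdots$ with supremum $\gamma$ (this is where the $\omega$-cpo hypothesis enters): at each step, $kw(X)\leq\kappa$ and Lemma~\ref{socks:lemma:tightness} give $t(\mathscr{C}(\beta X,K_{\alpha_{m+1}}))\leq\kappa$, which yields a set $E_m\subseteq A$ of size $\leq\kappa$ with $\beta p_{\alpha_{m+1}}\in\overline{(\beta p_{\alpha_{m+1}})^*(E_m)}$; then the sock condition (S4), giving $p_{\alpha_m}\to p_\gamma$ uniformly on $K_\gamma$, is combined with an $\varepsilon/3$ triangle-inequality argument (using that supports of elements of $D_m$ lie in $K_{\alpha_{m+1}}\subseteq K_\gamma$) to show $\operatorname{id}_X\in\overline D$. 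Your correct observations—that at most $\kappa$ supports are dominated by one $K_\alpha$, and that $t(\mathscr{C}(\beta X,K_\alpha))\leq\kappa$—are genuine ingredients of this argument, but the missing idea is to interleave them along an $\omega$-chain glued at $\gamma$ by (S4), rather than to seek a single capturing stage.
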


The paper is structured as follows. In Section~\ref{sect:basic}, we present
basic properties of spaces with $\kappa$-socks, such as preservation under
finite disjoint unions, arbitrary products, and certain quotients. We also
show that a continuous map from a space with a sock into a metrizable space is
determined by its values on a given compact subset (Theorem~\ref{basic:thm:evalpha}).
In Section~\ref{sect:socks}, we prove Theorems~\ref{main:thm:product} and~\ref{main:thm:sock}.

\section{Basic Properties}

\label{sect:basic}


\begin{proposition} \label{basic:prop:totallyordered}
Let $\kappa$ be an infinite cardinal, and
let $(\mathbb{I},\leq)$ be a $\kappa$-long
totally ordered set in which every subset bounded from above has a supremum in $\mathbb{I}$.
Let $X$ denote $\mathbb{I}$ with the order topology. 
Then $X$ has a $\kappa$-sock indexed by $(\mathbb{I},\leq)$.
\end{proposition}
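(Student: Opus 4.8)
The plan is to produce an explicit sock from the \emph{clamping} retractions. For each $\alpha \in \mathbb{I}$ define $p_\alpha \colon X \to X$ by $p_\alpha(x) \coloneqq \min(x,\alpha)$ and put $K_\alpha \coloneqq p_\alpha(X) = \{x \in \mathbb{I} : x \leq \alpha\}$. First I would observe that $(\mathbb{I},\leq)$ is itself a legitimate index for a sock: it is $\kappa$-long by hypothesis, and it is an $\omega$-cpo because any non-decreasing sequence $\{\alpha_k\}_{k<\omega}$ is a countable, hence $\leq\kappa$-sized, subset, so it is bounded above and therefore admits a supremum by the completeness hypothesis. Checking preimages of subbasic rays shows $p_\alpha$ is continuous, and since $\min(\min(x,\alpha),\alpha) = \min(x,\alpha)$ it is idempotent with $p_\alpha|_{K_\alpha} = \mathrm{id}$, i.e.\ a retraction onto $K_\alpha$.

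Properties (S1)--(S3) are then routine. For (S2), $\alpha \leq \beta$ gives $p_\beta p_\alpha(x) = \min(x,\alpha,\beta) = \min(x,\alpha) = p_\alpha(x)$, equivalently $K_\alpha \subseteq K_\beta$. For (S1), note that $K_\alpha$ is convex, so its subspace topology is its order topology; it has greatest element $\alpha$ and least element the smallest element of $\mathbb{I}$, and every subset has a supremum (being bounded above by $\alpha$) and, dually, an infimum (the supremum of its set of lower bounds), so $K_\alpha$ is a complete linearly ordered lattice and hence compact. For (S3), a compact $C \subseteq X$ is bounded above --- otherwise the cover $\{\{x : x < \beta\}\}_{\beta \in \mathbb{I}}$ admits no finite subcover --- so $u \coloneqq \sup C$ exists and $C \subseteq K_u$, making $\{K_\alpha\}$ cofinal in $\mathscr{K}(X)$.

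The crux is the convergence condition (S4); here the compact-open topology and the fact that $\gamma$ is the \emph{least} upper bound of $\{\alpha_k\}$ do the real work. Since a basic compact-open neighborhood is a finite intersection of subbasic ones, it suffices to fix a compact $C \subseteq X$ and an open $U \subseteq K_\gamma$ with $p_\gamma(C) \subseteq U$ and to show $p_{\alpha_k}(C) \subseteq U$ for all large $k$ (morally, uniform convergence on the compact set $C$, legitimate as $K_\gamma$ is compact Hausdorff). I would split into two cases. If $C \subseteq \{x : x < \gamma\}$, then the compact set $C$ has a maximum $c^\ast < \gamma$; since $c^\ast$ is not an upper bound of $\{\alpha_k\}$ we get $\alpha_k > c^\ast$ eventually, whence $p_{\alpha_k}|_C = p_\gamma|_C$. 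Otherwise $C$ meets $\{x : x \geq \gamma\}$, so $\gamma \in p_\gamma(C) \subseteq U$ and $U$ therefore contains a final segment $(\delta,\gamma] = \{x : \delta < x \leq \gamma\}$; for $x \in C$ one has $p_{\alpha_k}(x) = x = p_\gamma(x) \in U$ when $x \leq \alpha_k$, and $p_{\alpha_k}(x) = \alpha_k \in (\delta,\gamma] \subseteq U$ once $\alpha_k > \delta$. In either case $p_{\alpha_k}(C) \subseteq U$ eventually, which gives (S4). The main obstacle is precisely this last verification: pinning down the topology of $\mathscr{C}(X,K_\gamma)$ and handling the dichotomy according to whether $C$ reaches up to $\gamma$, each step relying on $\gamma = \sup_k \alpha_k$ being attained only in the limit rather than by some $\alpha_k$.
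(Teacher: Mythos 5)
Your proposal is correct, and the construction is exactly the paper's: the clamping retractions $p_\alpha(x)=\min(x,\alpha)$ indexed by $(\mathbb{I},\leq)$ itself, with (S1)--(S3) verified in essentially the same way (the paper cites Engelking for compactness of $[0,\alpha]$ and uses that a compact subset has an upper bound for (S3); your complete-lattice and open-cover arguments prove the same facts by hand). The genuine divergence is at (S4), which you correctly identify as the crux: the paper disposes of it in two lines by currying --- $\min\colon X\times X\to X$ is continuous, hence $p\colon X\to\mathscr{C}(X,X)$, $\alpha\mapsto p_\alpha$, is continuous, and a non-decreasing sequence converges to its supremum in the order topology, so $p(\sup\alpha_k)=\lim p(\alpha_k)$ --- whereas you check convergence directly against subbasic sets $[C,U]$ via the dichotomy on whether $C$ reaches $\gamma$. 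Your case analysis is sound: the reduction to subbasic neighborhoods is legitimate for sequence convergence, Case 1 correctly exploits that $\gamma$ is the \emph{least} upper bound (so $\max C<\gamma$ is eventually overtaken by the $\alpha_k$), and in Case 2 the neighborhood $(\delta,\gamma]$ of $\gamma=\max K_\gamma$ exists even in the degenerate situations (if $\gamma$ has an immediate predecessor, leastness forces some $\alpha_k=\gamma$ and the claim is trivial). The currying argument buys brevity and conceptual clarity, at the cost of quietly needing that increasing sequences converge to their suprema and that convergence in $\mathscr{C}(X,X)$ transfers to $\mathscr{C}(X,K_\gamma)$; your argument is more elementary and makes explicit exactly where leastness of $\gamma$ enters. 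One gloss to repair: in (S1) you invoke ``the smallest element of $\mathbb{I}$'' without justifying its existence, and your infimum argument (supremum of the set of lower bounds) silently needs that set to be non-empty; the paper's opening observation fixes this --- the empty set is vacuously bounded above, so it has a supremum, which is $\min\mathbb{I}$ --- and the point is not vacuous, since without a minimum the sets $K_\alpha$ need not be compact (think of $(-\infty,0]$ in $\mathbb{R}$).
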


\begin{proof}
Since the empty set is bounded, it has a supremum in $\mathbb{I}$, and so
$\mathbb{I}$ has a minimum. Put $0\coloneqq \min \mathbb{I}$.
Since $(\mathbb{I},\leq)$ is $\kappa$-long, every countable subset
has an upper bound; consequently, by our assumption, every countable
subset has a supremum. Thus, $(\mathbb{I},\leq)$ is an $\omega$-cpo.
For every $\alpha \in \mathbb{I}$, put $p_\alpha(x) = \min(x,\alpha)$. 
We show that $\{p_\alpha\}_{\alpha \in \mathbb{I}}$ is a $\kappa$-sock on $X$.

(S1) One has $p_\alpha(X) = [0,\alpha]$, which is compact (\cite[3.12.3(a)]{Engel}).

(S2) Since $\min(\min(x,\alpha),\beta) = \min(x,\min(\alpha,\beta))$, it follows
that $p_\beta p_\alpha = p_\alpha$ whenever $\alpha \leq \beta$.

(S3)  If $K \subseteq X$ is compact, then $K$ has an upper bound, say $\alpha \in \mathbb{I}$,
and so $K \subseteq [0,\alpha] = K_\alpha$.

(S4) It follows from (S1) that $X$ is locally compact. 
The map  $\min\colon X \times X \rightarrow X$ is continuous. Thus,
$p\colon X \rightarrow \mathscr{C}(X,X)$, defined
by $p(\alpha)=p_\alpha$, is continuous (cf.~\cite[3.4.8]{Engel}).
Therefore, if $\{\alpha_k\}_{k < \omega}$ is a non-decreasing sequence in
$\mathbb{I}$, then $p(\sup \alpha_k) = p( \lim \alpha_k) = \lim p(\alpha_k)$.
\end{proof}

\begin{corollary} \label{basic:cor:delta}
Let $\kappa$ be an infinite cardinal, and let $\delta$ be an ordinal.
If $\delta$ is $\kappa$-long, then the space $X=\delta$ equipped with the order topology
has a $\kappa$-sock. In particular, $\omega_1$ has a sock. \qed
\end{corollary}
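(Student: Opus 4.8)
The plan is to derive this corollary as an immediate consequence of Proposition~\ref{basic:prop:totallyordered}. To apply that proposition, I must verify that an ordinal $\delta$, viewed as a totally ordered set under its canonical order, satisfies the two hypotheses: it is $\kappa$-long (given), and every subset of $\delta$ bounded from above has a supremum in $\delta$. The total ordering is automatic, since any ordinal is well-ordered and hence linearly ordered. The crux is the supremum condition, which I expect to be the only point requiring a sentence of justification.

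First I would recall that in any ordinal $\delta$, an arbitrary subset $A \subseteq \delta$ has a supremum in the class of ordinals, namely $\bigcup A$ (the union, which equals $\sup A$ for ordinals). If $A$ is bounded above by some $\alpha < \delta$, then $\sup A \leq \alpha < \delta$, so $\sup A$ is itself an element of $\delta$; thus bounded subsets have suprema inside $\delta$, as required. This is the key step, and it is where the ordinal structure does the work: the fact that ordinals are closed under suprema of bounded subsets is precisely what makes the hypothesis of Proposition~\ref{basic:prop:totallyordered} hold automatically, whereas for a general totally ordered set it would need to be assumed separately.

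With both hypotheses verified, Proposition~\ref{basic:prop:totallyordered} applies directly and yields a $\kappa$-sock on $X = \delta$ indexed by $(\delta, \leq)$, establishing the main assertion. For the final sentence, I would specialize to $\kappa = \aleph_0$ and $\delta = \omega_1$. Here I need that $\omega_1$ is $\aleph_0$-long, i.e.\ long: every countable subset of $\omega_1$ is bounded above in $\omega_1$. This is the standard fact that $\omega_1$ has uncountable cofinality, so the supremum of any countable subset is a countable ordinal, hence lies strictly below $\omega_1$ and serves as an upper bound. Applying the already-established general statement with $\kappa = \aleph_0$ then gives that $\omega_1$ has a sock, completing the proof.

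I do not anticipate any genuine obstacle here; the entire content is the observation that ordinals satisfy the bounded-supremum closure property, after which the corollary is a direct invocation of the preceding proposition. The only care needed is to state the $\aleph_0$-longness of $\omega_1$ explicitly, since that is the one nontrivial set-theoretic input behind the final sentence.
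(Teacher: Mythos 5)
Your proposal is correct and matches the paper's intended argument exactly: the corollary carries a \qed{} in the text precisely because it is an immediate application of Proposition~\ref{basic:prop:totallyordered}, with the only content being that a bounded subset of an ordinal has its supremum inside the ordinal, and that $\omega_1$ is long by uncountable cofinality. Your verification of both hypotheses, including the explicit treatment of the supremum condition, is exactly the justification the paper leaves implicit.
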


The product $\mathbb{L}_{\geq 0}:=\omega_1 \times [0,1)$ equipped with order topology generated by the lexicographic order is called the {\itshape Closed Long Ray}. We identify
$\omega_1$ with the closed cofinal subset $\omega_1 \times \{0\}$ of $\mathbb{L}_{\geq 0}$.

\begin{corollary} \label{basic:cor:longray}
The Closed Long Ray $\mathbb{L}_{\geq 0}$ with the order topology has a sock and
has a countable compact weight.
\end{corollary}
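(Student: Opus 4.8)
The plan is to realize the sock via Proposition~\ref{basic:prop:totallyordered}, applied with $\kappa = \aleph_0$ and $\mathbb{I} = \mathbb{L}_{\geq 0}$. This reduces the first assertion to three checks: that $(\mathbb{L}_{\geq 0}, \leq)$ is totally ordered, that it is $\aleph_0$-long, and that every subset bounded from above has a supremum in $\mathbb{L}_{\geq 0}$. Total ordering is immediate from the definition of the lexicographic order. For $\aleph_0$-longness, given a countable set $\{(\alpha_n, t_n)\}_{n < \omega}$, I would note that $\delta := \sup_{n<\omega} \alpha_n$ is a countable ordinal because $\omega_1$ has uncountable cofinality, so that $(\delta + 1, 0)$ is an upper bound.

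The substantive verification is Dedekind completeness, and I expect it to be the main obstacle. Given a nonempty $S \subseteq \mathbb{L}_{\geq 0}$ bounded above, I would set $\delta := \sup\{\alpha : (\alpha, t) \in S\}$, which exists in $\omega_1$ since the first coordinates are bounded, and split into cases according to whether $\delta$ is attained as a first coordinate of some element of $S$. If $\delta$ is not attained, the supremum is $(\delta, 0)$. If it is attained, I would set $s^\ast := \sup\{t : (\delta, t) \in S\} \in [0,1]$, and the supremum is $(\delta, s^\ast)$ when $s^\ast < 1$ and $(\delta + 1, 0)$ when $s^\ast = 1$. In each case one verifies directly that the candidate is an upper bound and that nothing strictly smaller is. The delicate points are the passage across the junctions $(\gamma, 0)$ for limit $\gamma$ and the carry from $s^\ast = 1$ into the next copy of $[0,1)$; these are exactly what must be handled with care. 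Once the three hypotheses are in place, Proposition~\ref{basic:prop:totallyordered} furnishes an $\aleph_0$-sock, that is, a sock.

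For the compact weight, I would use property (S3) to contain an arbitrary compact $K \subseteq \mathbb{L}_{\geq 0}$ in some $K_\alpha = [(0,0), \alpha]$ with $\alpha = (\beta, s)$ and $\beta < \omega_1$. The countable set $D := \{(\gamma, q) : \gamma \leq \beta,\ q \in \mathbb{Q} \cap [0,1)\} \cap K_\alpha$ (countable because $\beta$ is a countable ordinal) is order-dense in $K_\alpha$, since the lexicographic order on $\mathbb{L}_{\geq 0}$ is itself dense and between any two points one can insert a point with rational second coordinate. Hence the countable family of open intervals with endpoints in $D$ is a base, so the compact space $K_\alpha$ is second countable, metrizable, and of weight $\aleph_0$ (equivalently, homeomorphic to $[0,1]$). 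Since weight does not increase on passing to a subspace, $w(K) \leq w(K_\alpha) = \aleph_0$ for every compact $K$, and therefore $kw(\mathbb{L}_{\geq 0}) \leq \aleph_0$.
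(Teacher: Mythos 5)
Your proposal is correct, and it follows the same skeleton as the paper: both reduce the sock to Proposition~\ref{basic:prop:totallyordered} applied to $(\mathbb{L}_{\geq 0},\leq)$ itself, with $\kappa=\aleph_0$. The difference lies in how the two nontrivial inputs are supplied. The paper dispatches both at once by citing the classical fact that every closed interval $[0,x]$ of $\mathbb{L}_{\geq 0}$ is homeomorphic to $[0,1]$ (\cite[1.10]{GauldBook}): this simultaneously yields that every bounded-above subset has a supremum (closed intervals are compact linearly ordered spaces, hence order-complete) and that every compact subset has countable weight, since each compact set sits inside some $[0,x]$. You instead verify Dedekind completeness by a direct lexicographic case analysis --- splitting on whether $\delta=\sup$ of the first coordinates is attained, with the carry to $(\delta+1,0)$ when $s^\ast=1$ --- and your three cases are exactly the right ones; and you prove the weight bound by exhibiting a countable order-dense set of points with rational second coordinate in $K_\alpha$, so that $K_\alpha$ is second countable, and then invoking monotonicity of weight under subspaces. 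Your route is thus more elementary and self-contained: it in effect reproves the content of the cited fact (your parenthetical remark that $K_\alpha$ is homeomorphic to $[0,1]$ recovers it), at the cost of hand verifications that the paper delegates to the literature. One pedantic point: Proposition~\ref{basic:prop:totallyordered} requires a supremum for \emph{every} subset bounded from above, including the empty set (its proof uses $\sup\emptyset$ to produce $\min\mathbb{I}$); you treat only nonempty $S$, but this is harmless, since $(0,0)=\min\mathbb{L}_{\geq 0}$ serves as $\sup\emptyset$.
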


\begin{proof}
Since $(\omega_1,\leq)$ is a long poset (with the lexicographic order), so is  $(\mathbb{L}_{\geq 0},\leq)$. Every interval $[0,x]$ in $(\mathbb{L}_{\geq 0},\leq)$ in the order topology
is homeomorphic to the compact interval $[0,1]$ (\cite[1.10]{GauldBook}).
(In particular, every compact subset of $\mathbb{L}_{\geq 0}$  has a countable weight.)
Therefore,
every subset bounded from above has a supremum in $\mathbb{L}_{\geq 0}$, 
and the statement follows by Proposition~\ref{basic:prop:totallyordered}.
\end{proof}

\begin{proposition}
Let $\kappa$ be an infinite cardinal, \label{basic:prop:sock-coprod}
let $\{X_j\}_{j \in J}$ be a family
of  topological spaces, and put
$X \coloneqq \coprod\limits_{j\in J} X_j$. Then:

\begin{enumerate}

\item 
$kw(X) = \sup\limits_{j \in J} kw(X_j)$; and

\item
if $J$ is finite and $X_j$ has a $\kappa$-sock for every $j \in J$, then 
$X$ has a $\kappa$-sock.

\end{enumerate}
\end{proposition}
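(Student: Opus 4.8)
The plan is to handle the two parts separately: part~(a) reduces to a weight computation once one knows the structure of compact subsets of a coproduct, while part~(b) requires building an explicit $\kappa$-sock on $X$ out of the given ones on the summands.

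For part~(a), the key observation is that every compact $K \subseteq X = \coprod_{j \in J} X_j$ meets only finitely many summands. Indeed, since the $X_j$ are clopen, the open cover $\{X_j\}_{j \in J}$ of $K$ admits a finite subcover, so $K = \coprod_{j \in F}(K \cap X_j)$ for some finite $F \subseteq J$, each $K \cap X_j$ being compact in $X_j$. The weight of a finite topological sum equals the supremum of the weights of its summands (a base is obtained by taking the union of bases of the clopen pieces), whence $w(K) = \sup_{j \in F} w(K \cap X_j) \leq \sup_{j \in J} kw(X_j)$; taking the supremum over all compact $K$ yields $kw(X) \leq \sup_{j \in J} kw(X_j)$. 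The reverse inequality is immediate, since each compact subset of $X_j$ is a compact subset of $X$. The convention $w(K) = \aleph_0$ for finite $K$ ensures these suprema behave correctly in the degenerate cases.

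For part~(b), write $J = \{1,\dots,n\}$ and let $\{p^{(j)}_\alpha\}_{\alpha \in \mathbb{I}_j}$ be a $\kappa$-sock on $X_j$ indexed by $(\mathbb{I}_j,\leq)$. I would take the index set to be the product $\mathbb{I} \coloneqq \prod_{j=1}^{n} \mathbb{I}_j$ with the coordinatewise order. First I would verify that $(\mathbb{I},\leq)$ is again a $\kappa$-long $\omega$-cpo: a subset of size $\leq\kappa$ projects in each coordinate to a set of size $\leq\kappa$, whose upper bounds assemble to an upper bound in $\mathbb{I}$; and a non-decreasing sequence in $\mathbb{I}$ has as its supremum the tuple of coordinatewise suprema (which exist as each $\mathbb{I}_j$ is an $\omega$-cpo), this tuple being the \emph{least} upper bound because both conditions are coordinatewise in a product order. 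For $\vec\alpha = (\alpha_1,\dots,\alpha_n) \in \mathbb{I}$, define $p_{\vec\alpha}\colon X \to X$ by $p_{\vec\alpha}|_{X_j} = p^{(j)}_{\alpha_j}$; this is continuous because continuity on a coproduct is checked summand-by-summand, and idempotent since each $p^{(j)}_{\alpha_j}$ is, so $\{p_{\vec\alpha}\}_{\vec\alpha \in \mathbb{I}}$ is a family of retracts.

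The verification of (S1)--(S3) is then routine: $K_{\vec\alpha} = \coprod_{j} K^{(j)}_{\alpha_j}$ is a finite sum of compact sets, hence compact (this is exactly where finiteness of $J$ is essential); (S2) holds coordinatewise by (S2) for each factor; and (S3) follows by bounding each $K \cap X_j$ via (S3) for the $j$-th sock and assembling. The main obstacle is (S4), the convergence of the retractions in the compact-open topology. Here I would invoke the natural homeomorphism $\mathscr{C}(\coprod_j X_j, Y) \cong \prod_j \mathscr{C}(X_j, Y)$ valid for any space $Y$, which holds because a compact subset of $\coprod_j X_j$ splits as a finite sum and the corresponding subbasic sets split as finite intersections. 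Under this identification $p_{\vec\alpha}$ corresponds to the tuple $(p^{(j)}_{\alpha_j})_j$, and convergence in a finite product is coordinatewise, so it suffices to show $\lim_k p^{(j)}_{\alpha^{(j)}_k} = p^{(j)}_{\gamma_j}$ in $\mathscr{C}(X_j, K_{\vec\gamma})$ for each $j$, where $\gamma_j = \sup_k \alpha^{(j)}_k$. This convergence already holds in $\mathscr{C}(X_j, K^{(j)}_{\gamma_j})$ by (S4) for the $j$-th sock, and postcomposing with the clopen embedding $K^{(j)}_{\gamma_j} \hookrightarrow K_{\vec\gamma}$ induces a continuous map of function spaces, which transports the convergent sequence to the desired one. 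This establishes (S4) and completes the construction of the $\kappa$-sock on $X$.
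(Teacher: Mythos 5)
Your proposal is correct and follows essentially the same route as the paper: reduce part~(a) to the fact that a compact subset of the coproduct splits as a finite sum of compact pieces, and for part~(b) index the sock by the product poset $\prod_{j}\mathbb{I}_j$ with the coordinatewise order and take the coproduct of the retractions. The only cosmetic differences are that for the weight estimate the paper bounds $w(K)$ by $\prod_{j\in F} w(K\cap X_j)$ where you use the (equivalent, for infinite cardinals) supremum, and that in (S4) you make explicit, via the homeomorphism $\mathscr{C}\bigl(\coprod_j X_j, Y\bigr) \cong \prod_j \mathscr{C}(X_j,Y)$ and postcomposition with the inclusion $K^{(j)}_{\gamma_j} \hookrightarrow K_{\gamma}$, a step the paper carries out in the same way but with less commentary.
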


\begin{proof}
(a) Clearly, $kw(X_j) \leq kw(X)$ for every $j \in J$, and so
$\sup\limits_{j \in J} kw(X_j) \leq kw(X)$. We show now the reverse inequality.
Let $K \subseteq X$ be compact. Then there is a finite subset $F\subseteq J$ such that
$K \subseteq \coprod\limits_{j\in F} X_j$, and $K = \coprod\limits_{j\in F} K \cap X_j$.
Therefore,
\begin{align}
    w(K) \leq \prod\limits_{j \in F} w(K \cap X_j) \leq \sup\limits_{j \in J} kw(X_j).
\end{align}

(b) For every $j \in J$, let $\{p_\alpha^{(j)}\}_{\alpha \in \mathbb{I}_j}$ be a $\kappa$-sock on\ $X_j$. Put $\mathbb{I} \coloneqq \prod\limits_{j \in J} \mathbb{I}_j$ equipped
with the coordinatewise order, that is, $(\alpha^{(j)}) \leq (\beta^{(j)})$ if 
$\alpha^{(j)} \leq \beta^{(j)}$ for every $j \in J$. Then $(\mathbb{I},\leq)$ is a 
$\kappa$-long $\omega$-cpo, because upper bounds and suprema can be calculated 
coordinatewise. For $\alpha=(\alpha^{(j)})\in\mathbb{I}$, put
$p_\alpha\coloneqq \coprod\limits_{j \in J} p^{(j)}_{\alpha^{(j)}}$. We verify that 
$\{p_\alpha\}_{\alpha \in \mathbb{I}}$ satisfies the conditions of a $\kappa$-sock on $X$.

(S1) For every $(\alpha^{(j)})\in\mathbb{I}$, the image
$K_\alpha\coloneqq p_\alpha(X)  =  
\coprod\limits_{j \in J} p^{(j)}_{\alpha^{(j)}} (X_j)$ is compact,
because $p^{(j)}_{\alpha^{(j)}} (X_j)$ is compact for every $j \in J$ and $J$ is finite.

(S2) If  $\alpha=(\alpha^{(j)}) \leq \beta=(\beta^{(j)})$ in $\mathbb{I}$, then
$\alpha^{(j)} \leq \beta^{(j)}$ for every $j \in J$, and so 
$p^{(j)}_{\beta^{(j)}} p^{(j)}_{\alpha^{(j)}} = p^{(j)}_{\alpha^{(j)}}$ for every $j \in J$.
Thus, $p_\beta p_\alpha = p_\alpha$.

(S3) If $K \subseteq X$ is compact, then $K \cap X_j$ is compact in $X_j$ for 
every $j \in J$, and so there is $\alpha^{(j)} \in \mathbb{I}_j$ such that
$K \cap X_j \subseteq K_{\alpha^{(j)}}$. Thus, for $\alpha=(\alpha^{(j)})$, one has
$K \subseteq \coprod\limits_{j\in J} K_{\alpha^{(j)}} =  K_\alpha$. This shows that 
$\{K_\alpha\}_{\alpha \in \mathbb{I}}$ is cofinal in $\mathscr{K}(X)$.

(S4) Let $\{\alpha_k\}_{k< \omega}$ be a non-decreasing sequence with
$\gamma = \sup\limits_{k < \omega} \alpha_k$ in $\mathbb{I}$. 
Since $\{p_\alpha^{(j)}\}_{\alpha \in \mathbb{I}_j}$ is a $\kappa$-sock on $X_j$,
one has 
$\lim\limits_k p^{(j)}_{\alpha_k^{(j)}} = p^{(j)}_{\gamma^{(j)}}$ in 
$\mathscr{C}(X_j,K^{(j)}_{\gamma^{(j)}})$ for every $j \in J$. 
Thus, $\lim\limits_k p^{(j)}_{\alpha_k^{(j)}} = p^{(j)}_{\gamma^{(j)}}$ 
in $\mathscr{C}(X_j,K_\gamma)$ for every $j \in J$,
and therefore
$\lim\limits_k p_{\alpha_k} = p_{\gamma}$ in
$\mathscr{C}(X,K_{\gamma})$, as desired.
\end{proof}

A map $f: X \rightarrow Y$ between 
spaces is a {\em $k$-covering} if for every $C\in \mathscr{K}(Y)$ 
there exists $K \in \mathscr{K}(X)$ such that $C \subseteq f(K)$
(\cite[p.~21]{McCoy}). 

\begin{proposition} \label{basic:prop:kw}
Let $f: X \rightarrow Y$ be a continuous map that is a  $k$-covering.
Then $kw(Y)\leq kw(X)$.
\end{proposition}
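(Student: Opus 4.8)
The plan is to unwind both sides to the definition of compact weight and reduce to a single pointwise estimate. Since $kw(Y) = \sup\{w(C) \mid C \in \mathscr{K}(Y)\}$, it suffices to show that $w(C) \le kw(X)$ for every compact $C \subseteq Y$, and then take the supremum over all such $C$. So I would fix an arbitrary $C \in \mathscr{K}(Y)$. The hypothesis that $f$ is a $k$-covering gives, by definition, a compact set $K \in \mathscr{K}(X)$ with $C \subseteq f(K)$; this is the only place the $k$-covering property is used.

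The next step is to trim $K$ to a compact set mapping exactly onto $C$, so as to realize $C$ as a continuous compact image of a genuine compact subset of $X$. As all spaces are Tychonoff, $Y$ is Hausdorff, so the compact set $C$ is closed in $Y$; by continuity of $f$ the preimage $f^{-1}(C)$ is then closed in $X$, and hence $K_0 \coloneqq K \cap f^{-1}(C)$ is a closed subset of the compact set $K$ and is therefore itself compact, i.e.\ $K_0 \in \mathscr{K}(X)$. One checks that $f(K_0) = C$: the inclusion $f(K_0) \subseteq C$ is immediate, and conversely every $y \in C \subseteq f(K)$ has a preimage in $K$, which necessarily lies in $K \cap f^{-1}(C) = K_0$. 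Thus $f$ restricts to a continuous surjection $f|_{K_0}\colon K_0 \twoheadrightarrow C$ between compact Hausdorff spaces.

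It then remains to apply the standard fact that weight does not increase under continuous surjections of compact Hausdorff spaces, giving $w(C) \le w(K_0)$; concretely this follows from $w = nw$ on compact Hausdorff spaces together with the monotonicity of the network weight under continuous surjections (if $\mathcal{N}$ is a network for $K_0$, then $\{f(N) \mid N \in \mathcal{N}\}$ is a network for $C$), cf.~\cite{Engel}. Since $K_0 \in \mathscr{K}(X)$, this yields $w(C) \le w(K_0) \le kw(X)$, and taking the supremum over $C \in \mathscr{K}(Y)$ gives $kw(Y) \le kw(X)$. The degenerate cases are harmless under the convention that finite compacta carry weight $\aleph_0$: if $C$ or $K_0$ is finite the relevant weights equal $\aleph_0$, and $kw(X) \ge \aleph_0$ always. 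I expect the only non-formal ingredient—and hence the main point to state carefully—to be the weight inequality $w(C) \le w(K_0)$ for the compact surjection $f|_{K_0}$; the passage from $f$ to $f|_{K_0}$ and the supremum bookkeeping are routine consequences of the definitions of $k$-covering and compact weight.
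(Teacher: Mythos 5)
Your proof is correct and takes essentially the same route as the paper: fix $C \in \mathscr{K}(Y)$, use the $k$-covering property to obtain $K \in \mathscr{K}(X)$ with $C \subseteq f(K)$, and conclude via the standard fact that weight does not increase under continuous maps of compact spaces. The only difference is that your trimming step $K_0 = K \cap f^{-1}(C)$ is unnecessary: since weight is monotone under passing to subspaces, the paper concludes directly $w(C) \leq w(f(K)) \leq w(K) \leq kw(X)$, citing \cite[3.3.7]{Engel} for the middle inequality (which is the same fact you prove via $w = nw$ and networks).
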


\begin{proof}
Let $C \in \mathscr{K}(Y)$. Since $f$ is a $k$-covering, there is $K\in \mathscr{K}(X)$
such that $C\subseteq f(K)$. One has $w(f(K)) \leq w(K)$ (\cite[3.3.7]{Engel}),
and therefore $w(C) \leq w(f(K)) \leq w(K) \leq kw(X)$, as desired.
\end{proof}

\begin{proposition}
Let $\kappa$ be an infinite cardinal,  and let $q \colon X\rightarrow Y$ 
be a quotient map that is a $k$-covering.\label{basic:prop:sock-quotient}
The following statements are equivalent for a 
$\kappa$-sock $\{p_\alpha\}_{\alpha \in \mathbb{I}}$ on $X$:

\begin{enumerate}[label={\rm (\roman*)}]

\item 
for every $\alpha \in \mathbb{I}$, $qp_\alpha$ is constant on the fibres of $q$; 

\item
there is a unique  $\kappa$-sock $\{\widetilde p_\alpha\}_{\alpha \in \mathbb{I}}$  on $Y$ such that
$\widetilde p_\alpha q = q p_\alpha$ for every $\alpha \in \mathbb{I}$.
\begin{align}
\bfig
\square|alra|/->`->`->`-->/[X`X`Y`Y;p_\alpha`q`q`\exists ! \widetilde p_\alpha]
\efig
\end{align}

\end{enumerate}

\end{proposition}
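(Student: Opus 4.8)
The plan is to treat the two implications separately, with essentially all the work going into $(i)\Rightarrow(ii)$; the reverse is a one-liner. Indeed, if a $\kappa$-sock $\{\widetilde p_\alpha\}$ on $Y$ with $\widetilde p_\alpha q = q p_\alpha$ exists, then $q p_\alpha$ factors through $q$ and is therefore constant on the fibres of $q$, which is $(i)$.

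For $(i)\Rightarrow(ii)$ I would begin with the universal property of quotient maps. Since $q$ is a quotient map and, by $(i)$, the continuous map $q p_\alpha\colon X\to Y$ is constant on the fibres of $q$, there is a unique continuous map $\widetilde p_\alpha\colon Y\to Y$ with $\widetilde p_\alpha q = q p_\alpha$. As $q$ is surjective, this relation determines $\widetilde p_\alpha$ completely, yielding both existence and the asserted uniqueness of the family $\{\widetilde p_\alpha\}_{\alpha\in\mathbb{I}}$. It then remains to verify that this family, indexed by the same $\kappa$-long $\omega$-cpo $\mathbb{I}$, is a $\kappa$-sock on $Y$.

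The algebraic conditions all follow by ``cancelling $q$ on the right'', using surjectivity of $q$. From $\widetilde p_\alpha q = q p_\alpha$ and idempotency of $p_\alpha$ one checks that $\widetilde p_\alpha$ is idempotent, hence a retraction, and that $\widetilde p_\beta \widetilde p_\alpha = \widetilde p_\alpha$ whenever $\alpha\le\beta$, which is (S2). For (S1), observe that $\widetilde K_\alpha\coloneqq \widetilde p_\alpha(Y)=\widetilde p_\alpha(q(X))=q(p_\alpha(X))=q(K_\alpha)$ is compact as a continuous image of the compact set $K_\alpha$. For (S3), given $C\in\mathscr{K}(Y)$, the $k$-covering property supplies $K\in\mathscr{K}(X)$ with $C\subseteq q(K)$, and cofinality of $\{K_\alpha\}$ in $\mathscr{K}(X)$ gives $\alpha$ with $K\subseteq K_\alpha$, whence $C\subseteq q(K_\alpha)=\widetilde K_\alpha$.

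The main obstacle is (S4): transporting $\lim p_{\alpha_k}=p_\gamma$ in $\mathscr{C}(X,K_\gamma)$ across $q$ to $\lim \widetilde p_{\alpha_k}=\widetilde p_\gamma$ in $\mathscr{C}(Y,\widetilde K_\gamma)$. It suffices to show that each compact-open subbasic neighbourhood of $\widetilde p_\gamma$ eventually contains $\widetilde p_{\alpha_k}$. So fix a compact $C\subseteq Y$ and an open $U=\widetilde K_\gamma\cap V$ (with $V$ open in $Y$) satisfying $\widetilde p_\gamma(C)\subseteq U$. The key device is an \emph{exact-fibre} lift: using the $k$-covering property choose compact $K_0\subseteq X$ with $C\subseteq q(K_0)$, and since $C$ is closed (as $Y$ is Hausdorff) the set $K\coloneqq K_0\cap q^{-1}(C)$ is compact with $q(K)=C$ exactly. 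Then $\widetilde p_\gamma(C)=q(p_\gamma(K))\subseteq V$ forces $p_\gamma(K)\subseteq K_\gamma\cap q^{-1}(V)$, which is open in $K_\gamma$; applying the convergence on $X$ to this subbasic neighbourhood of $p_\gamma$ gives $p_{\alpha_k}(K)\subseteq q^{-1}(V)$ eventually, and pushing forward yields $\widetilde p_{\alpha_k}(C)=q(p_{\alpha_k}(K))\subseteq V$. Combined with $\widetilde p_{\alpha_k}(C)\subseteq \widetilde K_{\alpha_k}\subseteq \widetilde K_\gamma$ (from (S2) and $\alpha_k\le\gamma$), this gives $\widetilde p_{\alpha_k}(C)\subseteq U$ eventually; passing from subbasic to basic neighbourhoods by finite intersection then completes (S4). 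I expect the only genuine subtlety to be the exact-fibre lifting $q(K)=C$, which is precisely what makes the $k$-covering hypothesis do its work in both (S3) and (S4).
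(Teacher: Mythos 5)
Your proposal is correct, and it follows the paper's skeleton exactly for everything except the one nontrivial step. The reverse implication, the construction of $\widetilde p_\alpha$ via the universal property of the quotient, uniqueness from surjectivity of $q$, and the verifications of (S1)--(S3) coincide with the paper's proof essentially line by line. The difference is in (S4). The paper first post-composes with $q$ (continuity of composition in the compact-open topology) to get $\lim q p_{\alpha_k} = q p_\gamma$ in $\mathscr{C}(X,\widetilde K_\gamma)$, and then invokes a cited general fact: for a $k$-covering $q$, the map $q_*\colon \mathscr{C}(Y,\widetilde K_\gamma)\to \mathscr{C}(X,\widetilde K_\gamma)$, $f\mapsto fq$, is not merely continuous but an \emph{embedding}, so convergence can be pulled back to $\mathscr{C}(Y,\widetilde K_\gamma)$. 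You instead verify convergence directly on subbasic compact-open neighbourhoods, and your exact-fibre lift $K \coloneqq K_0\cap q^{-1}(C)$ (compact since $C$ is closed in the Hausdorff space $Y$, with $q(K)=C$ because $C\subseteq q(K_0)$) is precisely the mechanism hiding inside the cited embedding result --- in effect you re-prove, in the special case needed here, that $q_*$ reflects convergence of sequences. Both arguments are sound: the paper's appeal to the embedding lemma buys brevity and a reusable general statement, while your version buys self-containedness at the cost of a page of point-set checking; note also that your route only establishes reflection of \emph{sequential} convergence, which suffices since (S4) is a statement about sequences, whereas the embedding lemma gives the stronger topological fact. Your remaining details (the inclusion $\widetilde p_{\alpha_k}(C)\subseteq \widetilde K_{\alpha_k}\subseteq \widetilde K_\gamma$ via (S2), and the passage from subbasic to basic neighbourhoods by finite intersection, legitimate for sequences) all check out.
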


\begin{proof}
It is clear that (ii) implies (i), and so we prove only that (i) implies (ii).
Since  for every $\alpha\in\mathbb{I}$,
$q p_\alpha\colon X \rightarrow Y $ is constant on the fibres of $q$ 
(which is a quotient map), $q p_\alpha$ induces
a unique continuous map $\widetilde p_\alpha\colon Y \rightarrow Y$ such that
$\widetilde p_\alpha q = q p_\alpha$. We show that 
$\{\widetilde p_\alpha\}_{\alpha \in \mathbb{I}}$ is a $\kappa$-sock on 
$Y$.

(S1) The image
\begin{align}
\widetilde K_\alpha = \widetilde p_\alpha(Y) = \widetilde p_\alpha(q(X))
=q(p_\alpha(X)) = q(K_\alpha)
\end{align}
is compact, because $K_\alpha = p_\alpha(X)$ is compact.

(S2) Let $\alpha,\beta \in \mathbb{I}$ be such that $\alpha \leq \beta$. 
Since $\{p_\alpha\}_{\alpha \in \mathbb{I}}$ is a $\kappa$-sock on $X$,
one has $p_\beta p_\alpha = p_\alpha$. Thus,
\begin{align}
\widetilde p_\beta \widetilde p_\alpha q  = 
\widetilde p_\beta q p_\alpha = q p_\beta p_\alpha = q p_\alpha = \widetilde p_\alpha q,
\end{align}
and so $\widetilde p_\beta \widetilde p_\alpha = \widetilde p_\alpha$,
because $q$ is surjective. In particular, $\widetilde p_\alpha$ is a retract.

(S3) Let $C$ be a compact subset of $Y$. Since $q$ is a $k$-covering,
there is a compact subset $K$ of $X$ such that $C\subseteq  q(K)$.
There is $\alpha \in \mathbb{I}$ such that $K \subseteq K_\alpha = p_\alpha(X)$,
because $\{p_\alpha\}_{\alpha \in \mathbb{I}}$ is a $\kappa$-sock on~$X$. Thus,
\begin{align}
C \subseteq q(K) \subseteq q(p_\alpha(X)) = \widetilde p_\alpha (q(X)) = \widetilde p_\alpha(Y)=
\widetilde K_\alpha.
\end{align}

(S4) Let $\{\alpha_k\}_{k <\omega}$ be a non-decreasing sequence with
$\gamma = \sup\limits_{k < \omega} \alpha_k$ in $\mathbb{I}$.
Since $\{p_\alpha\}_{\alpha \in \mathbb{I}}$ is a $\kappa$-sock on $X$,
one has $\lim p_{\alpha_k} = p_\gamma$ in $\mathscr{C}(X,K_\gamma)$. Thus,
\begin{align}
    \lim \widetilde p_{\alpha_k} q = \lim q  p_{\alpha_k} = q p_\gamma
    = \widetilde p_\gamma q
    \label{eq:qpgamma}
\end{align}
in  $\mathscr{C}(X,q(K_\gamma)) = \mathscr{C}(X,\widetilde K_\gamma)$, 
because composition is continuous in the compact-open topology
(\cite[3.4.2]{Engel}). The natural map 
$q_*\colon \mathscr{C}(Y,\widetilde K_\gamma) \rightarrow \mathscr{C}(X,\widetilde K_\gamma)$
defined by $f \mapsto f q$ is not only continuous, but also an embedding,
because $q$ is a $k$-covering
(\cite[2.12(a)]{GLPHD}; see also \cite[2.2.6(c)]{McCoy}). By (\ref{eq:qpgamma}), 
$\lim q_*(\widetilde p_{\alpha_k}) = q_*(\widetilde p_\gamma)$, and therefore
$\lim \widetilde p_{\alpha_k} = \widetilde p_\gamma$ in $\mathscr{C}(Y,\widetilde K_\gamma)$, as desired.
\end{proof}

The {\itshape Long Line} $\mathbb{L}$ is obtained by gluing together two copies of the Closed Long Ray at the boundary points $0$. 
{\em Par abus de langage}, we refer to the image of the point $0$ in the quotient
as $0$ again.

\begin{corollary} \label{basic:cor:longline}
The Long Line $\mathbb{L}$  has a sock and
has a countable compact weight.
\end{corollary}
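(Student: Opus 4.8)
The plan is to realize $\mathbb{L}$ as a quotient of a disjoint union of two copies of the Closed Long Ray and then transport the sock along the quotient map via Proposition~\ref{basic:prop:sock-quotient}. Concretely, set $X \coloneqq \mathbb{L}_{\geq 0} \sqcup \mathbb{L}_{\geq 0}$ and let $q \colon X \to \mathbb{L}$ be the gluing map, which identifies the two boundary points $0$ and restricts to a homeomorphism elsewhere; by the definition of $\mathbb{L}$ this is a quotient map. By Corollary~\ref{basic:cor:longray}, each copy of $\mathbb{L}_{\geq 0}$ has a sock and countable compact weight, so by Proposition~\ref{basic:prop:sock-coprod} the finite disjoint union $X$ carries a sock $\{p_\alpha\}_{\alpha \in \mathbb{I}}$ and satisfies $kw(X) = \aleph_0$.

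First I would check that $q$ is a $k$-covering. Writing $R_+, R_- \subseteq \mathbb{L}$ for the two closed rays (the images of the two copies), these are closed and cover $\mathbb{L}$, so any compact $C \subseteq \mathbb{L}$ splits as $C = (C \cap R_+) \cup (C \cap R_-)$ with each piece compact. Pulling each piece back through the relevant copy, on which $q$ restricts to a homeomorphism, yields a compact $K \subseteq X$ with $C \subseteq q(K)$. Hence $q$ is a $k$-covering, and Proposition~\ref{basic:prop:kw} then gives $kw(\mathbb{L}) \leq kw(X) = \aleph_0$, establishing the countable compact weight.

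Next I would verify hypothesis~(i) of Proposition~\ref{basic:prop:sock-quotient}, namely that each $q p_\alpha$ is constant on the fibres of $q$. The only nontrivial fibre is the two-point set over $0 \in \mathbb{L}$, consisting of the two copies $0_+, 0_-$ of the boundary point. The sock on $X$ is assembled coordinatewise from the retracts of Proposition~\ref{basic:prop:totallyordered}, which have the form $x \mapsto \min(x,\beta)$; since $0$ is the minimum of $\mathbb{L}_{\geq 0}$, each such retract fixes $0$. Thus $p_\alpha(0_+) = 0_+$ and $p_\alpha(0_-) = 0_-$, whence $q p_\alpha(0_+) = 0 = q p_\alpha(0_-)$, so (i) holds.

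With (i) confirmed, Proposition~\ref{basic:prop:sock-quotient} produces a (unique) sock $\{\widetilde p_\alpha\}_{\alpha \in \mathbb{I}}$ on $\mathbb{L}$ with $\widetilde p_\alpha q = q p_\alpha$, which together with the compact-weight bound completes the proof. I expect the main (though still modest) obstacle to be the $k$-covering verification, i.e.\ decomposing an arbitrary compact subset of $\mathbb{L}$ into compact pieces lying in the two rays; once this is in hand, the fibre condition reduces to the trivial observation that the coordinate retracts fix the glued minimum point $0$.
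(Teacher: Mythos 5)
Your proposal is correct and follows essentially the same route as the paper's proof: realize $\mathbb{L}$ as a quotient of $\mathbb{L}_{\geq 0} \sqcup \mathbb{L}_{\geq 0}$, combine Corollary~\ref{basic:cor:longray} with Proposition~\ref{basic:prop:sock-coprod} to get a sock on the disjoint union, check that each $q p_\alpha$ is constant on the unique nontrivial fibre, and apply Propositions~\ref{basic:prop:sock-quotient} and~\ref{basic:prop:kw}. The only difference is that you explicitly verify the $k$-covering hypothesis (by splitting a compact set along the two closed rays), a step the paper leaves implicit, which is a welcome addition rather than a deviation.
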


\begin{proof}
Let $X$ denote the disjoint union of two copies of the Closed Long Ray $\mathbb{L}_{\geq 0}$.
By definition, there is a quotient map $q\colon X \rightarrow \mathbb{L}$, whose fibres
are singletons with the exception of the point $0$, where $q^{-1}(\{0\})$ contains two points.
By Corollary~\ref{basic:cor:longray}, the Closed Long Ray $\mathbb{L}_{\geq 0}$ 
has a sock
defined by $x \mapsto \min(x,\alpha)$.
By Proposition~\ref{basic:prop:sock-coprod}, this induces 
a sock  $\{p_\alpha\}_{\alpha \in \mathbb{I}}$  on $X$, 
and it can easily be checked that $qp_\alpha$ are constant on the fibres of $q$.
Therefore, by Proposition~\ref{basic:prop:sock-quotient}, $\{p_\alpha\}_{\alpha \in \mathbb{I}}$ 
induces a unique sock  $\{\widetilde p_\alpha\}_{\alpha \in \mathbb{I}}$  on $\mathbb{L}$ such that $\widetilde p_\alpha q = q p_\alpha$ for every $\alpha \in \mathbb{I}$.
Lastly, by Proposition~\ref{basic:prop:kw}, $kw(\mathbb{L}) \leq kw(X) = \aleph_0$.
\end{proof}

\begin{proposition}
Let $\kappa$ be an infinite cardinal, \label{basic:prop:sock-prod}
let $\{X_j\}_{j \in J}$ be a family
of non-empty topological spaces, and put
$X \coloneqq \prod\limits_{j\in J} X_j$. Then:

\begin{enumerate}

\item 
$\sup\limits_{j \in J} kw(X_j)  \leq kw(X) \leq |J| \sup\limits_{j \in J} kw(X_j)$; and

\item
if $X_j$ has a $\kappa$-sock for every $j \in J$, then 
$X$ has a $\kappa$-sock.

\end{enumerate}
\end{proposition}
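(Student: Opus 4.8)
The plan is to handle (a) and (b) separately. For (a), I would derive the lower bound from the behaviour of $kw$ under $k$-coverings and the upper bound from the standard weight estimate for products. Since each $X_i$ is non-empty, fix a point $x_i\in X_i$; for a given $j$ and any compact $C\subseteq X_j$, the set $C\times\prod_{i\neq j}\{x_i\}$ is compact with image $C$ under the projection $\pi_j\colon X\to X_j$, so $\pi_j$ is a continuous $k$-covering and Proposition~\ref{basic:prop:kw} yields $kw(X_j)\leq kw(X)$; taking the supremum over $j$ gives the lower bound. For the upper bound I would take any compact $K\subseteq X$, observe that $K\subseteq\prod_j\pi_j(K)$ with each $\pi_j(K)$ compact (hence $w(\pi_j(K))\leq kw(X_j)$), and combine monotonicity of weight under passage to subspaces with the product estimate $w(\prod_j\pi_j(K))\leq |J|\cdot\sup_j w(\pi_j(K))$ (obtained by counting a subbase; cf.~\cite[2.3.13]{Engel}) to get $w(K)\leq |J|\sup_j kw(X_j)$. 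The supremum over all compact $K$ then completes (a).

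For (b), I would mimic the coproduct construction of Proposition~\ref{basic:prop:sock-coprod}(b): set $\mathbb{I}:=\prod_j\mathbb{I}_j$ with the coordinatewise order and define $p_\alpha:=\prod_j p^{(j)}_{\alpha^{(j)}}$ for $\alpha=(\alpha^{(j)})\in\mathbb{I}$. Since upper bounds and suprema in $\mathbb{I}$ are computed coordinatewise, $(\mathbb{I},\leq)$ is again a $\kappa$-long $\omega$-cpo, by the same argument used there. Properties (S1)--(S3) are then routine: (S1) holds because $K_\alpha=\prod_j K^{(j)}_{\alpha^{(j)}}$ is a product of compact spaces, hence compact---note that here, unlike in Proposition~\ref{basic:prop:sock-coprod}(b), the index set $J$ need not be finite, so this step genuinely rests on Tychonoff's theorem; (S2) is coordinatewise; and (S3) follows from cofinality in each factor, using $K\subseteq\prod_j\pi_j(K)$ for compact $K\subseteq X$.

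The one substantial point is (S4). Let $\{\alpha_k\}_{k<\omega}$ be non-decreasing with $\gamma=\sup_k\alpha_k$, so that $\gamma^{(j)}=\sup_k\alpha_k^{(j)}$ for each $j$. I would reduce the required convergence in $\mathscr{C}(X,K_\gamma)$ to coordinatewise convergence through the natural homeomorphism
\[
\mathscr{C}\Bigl(X,\prod_j K^{(j)}_{\gamma^{(j)}}\Bigr)\;\cong\;\prod_j\mathscr{C}\bigl(X,K^{(j)}_{\gamma^{(j)}}\bigr),\qquad f\mapsto(\pi_j\circ f)_j.
\]
Since $\pi_j\circ p_\alpha=p^{(j)}_{\alpha^{(j)}}\circ\pi_j$, the $j$-th coordinate of $p_{\alpha_k}$ is the image of $p^{(j)}_{\alpha_k^{(j)}}$ under the precomposition map $f\mapsto f\circ\pi_j$, which is continuous in the compact-open topology (\cite[3.4.2]{Engel}). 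By (S4) applied to $X_j$ one has $\lim_k p^{(j)}_{\alpha_k^{(j)}}=p^{(j)}_{\gamma^{(j)}}$, so each coordinate converges, and the displayed homeomorphism delivers $\lim_k p_{\alpha_k}=p_\gamma$, as required.

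The main obstacle is justifying that displayed homeomorphism, i.e.\ that $\mathscr{C}(X,-)$ carries arbitrary products to products of mapping spaces. The forward map is continuous because each $f\mapsto\pi_j\circ f$ is; the content lies in the continuity of its inverse, which amounts to showing that the compact-open topology on $\mathscr{C}(X,\prod_j Y_j)$ is already generated by the subbasic sets $[C,\pi_j^{-1}(V)]$ with $C\subseteq X$ compact and $V\subseteq Y_j$ open. To see this, given $f$ with $f(C)\subseteq W$ for open $W$, I would cover the compact set $f(C)$ by finitely many basic product-opens contained in $W$, pull them back to an open cover of $C$, pass to a compact shrinking (using that $C$ is compact Hausdorff, hence normal), and thereby exhibit a finite intersection of sets $[C',\pi_j^{-1}(V)]$ that contains $f$ and lies inside $[C,W]$. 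This is the only genuinely non-formal step; the remainder of (b) is bookkeeping parallel to Proposition~\ref{basic:prop:sock-coprod}(b).
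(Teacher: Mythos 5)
Your proposal is correct and takes essentially the same route as the paper: the same coordinatewise index set $\mathbb{I}=\prod_j\mathbb{I}_j$ and retractions $p_\alpha=\prod_j p^{(j)}_{\alpha^{(j)}}$, with (S4) reduced to convergence in each coordinate exactly as in the paper's argument. The only differences are cosmetic: for the lower bound in (a) you invoke Proposition~\ref{basic:prop:kw} via the observation that each projection $\pi_j$ is a $k$-covering, where the paper instead embeds each $X_j$ into $X$, and in (S4) you explicitly prove the standard homeomorphism $\mathscr{C}\bigl(X,\prod_j Y_j\bigr)\cong\prod_j\mathscr{C}(X,Y_j)$ (correctly, via a compact shrinking of a finite subcover) that the paper uses implicitly when passing from $\lim_k \pi_j p_{\alpha_k}=\pi_j p_\gamma$ in each $\mathscr{C}\bigl(X,K^{(j)}_{\gamma^{(j)}}\bigr)$ to $\lim_k p_{\alpha_k}=p_\gamma$ in $\mathscr{C}(X,K_\gamma)$.
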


\begin{proof}
Let $\pi_j \colon X \rightarrow X_j$ denote the canonical projection for every $j \in J$. 

(a) Since $X_j$ is non-empty for ever $j\in J$, each $X_j$ can be embedded into the product
$X$, and so $kw(X_j) \leq kw(X)$.  On the other hand,  if $K \subseteq X$ is compact,
then $K \subseteq \prod\limits_{j \in J} \pi_j(K)$. Thus, by \cite[2.3.13]{Engel},
\begin{align}
w(K) \leq |J| \sup_{j \in J} w(\pi_j(K)) \leq 
|J| \sup_{j \in J} kw(X_j).
\end{align}

(b)  For every $j \in J$, let $\{p_\alpha^{(j)}\}_{\alpha \in \mathbb{I}_j}$ be a $\kappa$-sock on $X_j$. Put $\mathbb{I} \coloneqq \prod\limits_{j \in J} \mathbb{I}_j$ equipped
with the coordinatewise order, that is, $(\alpha^{(j)}) \leq (\beta^{(j)})$ if 
$\alpha^{(j)} \leq \beta^{(j)}$ for every $j \in J$. Then $(\mathbb{I},\leq)$ is a 
$\kappa$-long $\omega$-cpo, because upper bounds and suprema can be calculated 
coordinatewise. For $\alpha=(\alpha^{(j)})\in\mathbb{I}$, put
$p_\alpha\coloneqq \prod\limits_{j \in J} p^{(j)}_{\alpha^{(j)}}$. We verify that 
$\{p_\alpha\}_{\alpha \in \mathbb{I}}$ satisfies the conditions of a $\kappa$-sock on $X$.

(S1) For every $(\alpha^{(j)})\in\mathbb{I}$, the image
$K_\alpha\coloneqq p_\alpha(X)  =  
\prod\limits_{j \in J} p^{(j)}_{\alpha^{(j)}} (X_j)$ is compact,
because $p^{(j)}_{\alpha^{(j)}} (X_j)$ is compact for every $j \in J$.

(S2) If  $\alpha=(\alpha^{(j)}) \leq \beta=(\beta^{(j)})$ in $\mathbb{I}$, then
$\alpha^{(j)} \leq \beta^{(j)}$ for every $j \in J$, and so 
$p^{(j)}_{\beta^{(j)}} p^{(j)}_{\alpha^{(j)}} = p^{(j)}_{\alpha^{(j)}}$ for every $j \in J$.
Thus, $p_\beta p_\alpha = p_\alpha$.

(S3) If $K \subseteq X$ is compact, then $\pi_j(K)$ is compact in $X_j$ for 
every $j \in J$, and so there is $\alpha^{(j)} \in \mathbb{I}_j$ such that
$\pi_j(K) \subseteq K_{\alpha^{(j)}}$. Thus, for $\alpha=(\alpha^{(j)})$, one has
$K \subseteq \prod\limits_{j\in J} K_{\alpha^{(j)}} =  K_\alpha$. This shows that 
$\{K_\alpha\}_{\alpha \in \mathbb{I}}$ is cofinal in $\mathscr{K}(X)$.

(S4) Let $\{\alpha_k\}_{k< \omega}$ be a non-decreasing sequence with
$\gamma = \sup\limits_{k < \omega} \alpha_k$ in $\mathbb{I}$. 
Since $\{p_\alpha^{(j)}\}_{\alpha \in \mathbb{I}_j}$ is a $\kappa$-sock on $X_j$,
one has 
$\lim\limits_k p^{(j)}_{\alpha_k^{(j)}} = p^{(j)}_{\gamma^{(j)}}$ in 
$\mathscr{C}(X_j,K^{(j)}_{\gamma^{(j)}})$ for every $j \in J$. Thus,
$\lim\limits_k \pi_j p_{\alpha_k} = \pi_j p_{\gamma}$ in
$\mathscr{C}(X,K^{(j)}_{\gamma^{(j)}})$ for every $j \in J$, and therefore
$\lim\limits_k p_{\alpha_k} = p_{\gamma}$ in
$\mathscr{C}(X,K_{\gamma})$, as desired.
\end{proof}

Proposition~\ref{basic:prop:sock-prod} combined with Corollaries~\ref{basic:cor:delta}, \ref{basic:cor:longray}, and \ref{basic:cor:longline} yield the following conclusion.

\begin{corollary}\label{basic:cor:long}
Let $C$ be a compact space. For every $n,m,l<\omega$,
the product space 
\begin{align}
\mathbb{L}^n \times (\mathbb{L}_{\geq 0})^m \times (\omega_1)^l \times C
\end{align}
has a sock. Furthermore, if $C$ is metrizable, then 
$\mathbb{L}^n \times (\mathbb{L}_{\geq 0})^m \times (\omega_1)^l \times C$ has
a countable compact weight. \qed
\end{corollary}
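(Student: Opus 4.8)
The plan is to realize $X\coloneqq\mathbb{L}^n \times (\mathbb{L}_{\geq 0})^m \times (\omega_1)^l \times C$ as a product over a \emph{finite} index set $J$ of cardinality $n+m+l+1$, whose factors are the individual copies of $\mathbb{L}$, $\mathbb{L}_{\geq 0}$, and $\omega_1$, together with $C$, and then to feed this family into Proposition~\ref{basic:prop:sock-prod}. Thus the whole statement reduces to checking, factor by factor, the two hypotheses ``has a sock'' and ``has countable compact weight''.

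First I would verify that each factor has a sock. For $\mathbb{L}$ this is Corollary~\ref{basic:cor:longline}, for $\mathbb{L}_{\geq 0}$ it is Corollary~\ref{basic:cor:longray}, and for $\omega_1$ it is the last assertion of Corollary~\ref{basic:cor:delta}. The only factor not covered by the earlier results is the compact space $C$, and here I would observe that any compact space carries a sock trivially: take the one-point poset $\mathbb{I}=\{*\}$, which is vacuously an $\aleph_0$-long $\omega$-cpo, with $p_*=\mathrm{id}_C$. Then $K_*=C$ is compact and is the maximum of $\mathscr{K}(C)$, so (S1)--(S3) hold, while (S4) is immediate because the only non-decreasing sequence is constant. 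Since the factors are non-empty (the degenerate case $C=\varnothing$ makes $X$ empty and is trivial), Proposition~\ref{basic:prop:sock-prod}(b), which imposes no finiteness condition, then produces a sock on $X$.

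For the compact-weight claim, assume $C$ is metrizable and compute $kw$ of each factor. For $\mathbb{L}$ and $\mathbb{L}_{\geq 0}$ one has $kw=\aleph_0$ by Corollaries~\ref{basic:cor:longline} and~\ref{basic:cor:longray}; for $C$ compact metrizable, $kw(C)=w(C)\le\aleph_0$ (using the convention $kw=\aleph_0$ in the finite case). The factor requiring a short separate argument is $\omega_1$: every compact $K\subseteq\omega_1$ is bounded, hence contained in some interval $[0,\alpha]$ with $\alpha<\omega_1$; such an interval is a countable compact ordinal and is therefore metrizable, so $w(K)\le w([0,\alpha])=\aleph_0$, giving $kw(\omega_1)=\aleph_0$. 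Consequently $\sup_{j\in J}kw(X_j)=\aleph_0$, and since $|J|=n+m+l+1$ is finite, Proposition~\ref{basic:prop:sock-prod}(a) yields $kw(X)\le |J|\cdot\aleph_0=\aleph_0$.

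The main obstacle---indeed essentially the only point needing thought---is the compact weight of $\omega_1$, which the earlier corollaries do not record explicitly; everything else is either a direct citation or the trivial remark that a compact space is its own cofinal compact subset. I therefore expect no serious difficulty beyond keeping careful track of which proposition supplies each factor's properties.
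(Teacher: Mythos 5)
Your proof is correct and takes essentially the same route as the paper, which derives the corollary in one line from Proposition~\ref{basic:prop:sock-prod} combined with Corollaries~\ref{basic:cor:delta}, \ref{basic:cor:longray}, and~\ref{basic:cor:longline} --- exactly the factorwise reduction you carry out. The two details you make explicit, namely the trivial one-point sock on a compact factor and $kw(\omega_1)=\aleph_0$ via boundedness of compact subsets of $\omega_1$, are precisely the routine facts the paper leaves implicit, and both of your arguments for them are sound.
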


It is well known that every continuous function from the Closed Long Ray $\mathbb{L}_{\geq 0}$ 
into the real line is eventually constant; however, only spaces whose Stone-\v{C}ech 
remainder is a singleton can satisfy such a strong property. For example, not every
continuous real valued function on the Long Line $\mathbb{L}$ is eventually constant.
Nevertheless, spaces that have a sock satisfy the property that every continuous function into 
a metrizable space is determined by its value on some compact subset.

\begin{theorem} \label{basic:thm:evalpha}
Let $X$ be a locally compact space with a sock $\{p_{\alpha}\}_{\alpha \in \mathbb{I}}$
such that, in addition, $p_\alpha p_\beta = p_\alpha$ for every $\alpha \leq \beta$.
Let $g\colon X \rightarrow M$ be a continuous map into a metrizable space $M$. Then
there is $\alpha_0 \in \mathbb{I}$ such that $g = g p_{\alpha_0}$.
\end{theorem}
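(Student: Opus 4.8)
The plan is to measure, for each index, how far $g$ is from factoring through the corresponding retraction, and then to drive this discrepancy to zero at a suitable limit index. Since $M$ is metrizable, I would first fix a bounded metric $d$ on $M$ (replacing $d$ by $\min(d,1)$ if necessary) and set
\[
D(\alpha) \coloneqq \sup_{x \in X} d\bigl(g(x), g p_\alpha(x)\bigr) \in [0,1], \qquad \alpha \in \mathbb{I}.
\]
The target assertion $g = g p_{\alpha_0}$ is exactly $D(\alpha_0)=0$, so the whole proof reduces to producing an index at which $D$ vanishes.

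The first genuine step, and the place where the extra hypothesis $p_\alpha p_\beta = p_\alpha$ enters, is a monotonicity estimate. For $\alpha \le \beta$ and any $x\in X$, put $y = p_\beta(x)$; the hypothesis gives $g p_\alpha(x) = g p_\alpha p_\beta(x) = g p_\alpha(y)$, so the triangle inequality yields $d(g(x), g p_\beta(x)) \le d(g(x), g p_\alpha(x)) + d(g p_\alpha(y), g(y)) \le 2D(\alpha)$. Taking the supremum over $x$ gives $D(\beta) \le 2 D(\alpha)$ whenever $\alpha \le \beta$; thus $D$ is non-increasing along the order up to the constant factor $2$.

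The crux is to show $\inf_{\alpha} D(\alpha) = 0$, and I expect this to be the main obstacle. Suppose to the contrary that $D(\alpha) \ge c$ for all $\alpha$, with $c>0$. I would then build a non-decreasing sequence $\alpha_0 \le \alpha_1 \le \cdots$ with witnesses $x_k$ as follows: given $\alpha_k$, choose $x_k$ with $d(g(x_k), g p_{\alpha_k}(x_k)) > c/2$ (possible since $D(\alpha_k)\ge c$), and then, using that $\{K_\alpha\}$ is cofinal in $\mathscr{K}(X)$ and that $\mathbb{I}$ is long, choose $\alpha_{k+1}\ge\alpha_k$ with $x_k \in K_{\alpha_{k+1}}$, so that $p_{\alpha_{k+1}}(x_k)=x_k$ and hence $g(x_k)=g p_{\alpha_{k+1}}(x_k)$. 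Let $\gamma = \sup_k \alpha_k$ (which exists as $\mathbb{I}$ is an $\omega$-cpo); then every $x_k$ lies in the single compact set $K_\gamma$. By (S4), $p_{\alpha_k}\to p_\gamma$ in $\mathscr{C}(X,K_\gamma)$, and since post-composition with the continuous map $g$ is continuous for the compact-open topology, $g p_{\alpha_k}\to g p_\gamma$ in $\mathscr{C}(X,M)$; as $M$ is metric this means uniform convergence on the compact set $K_\gamma$, so $\delta_k \coloneqq \sup_{x \in K_\gamma} d(g p_{\alpha_k}(x), g p_\gamma(x)) \to 0$. Evaluating at $x_k \in K_\gamma$ and inserting $g p_\gamma(x_k)$, the combination of the witness inequality and $g(x_k)=gp_{\alpha_{k+1}}(x_k)$ gives $c/2 < d(g p_{\alpha_{k+1}}(x_k), g p_{\alpha_k}(x_k)) \le \delta_{k+1} + \delta_k \to 0$, a contradiction. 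Hence $\inf_\alpha D(\alpha) = 0$. The delicate point here is arranging the witnesses to land in one common compact $K_\gamma$, so that the uniformly large witnesses collide with the uniformly small tail supplied by (S4).

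Finally, from $\inf_\alpha D(\alpha)=0$ I would select indices along which $D$ tends to $0$ and, taking successive upper bounds (available since $\mathbb{I}$ is long), replace them by a non-decreasing sequence $\beta_1 \le \beta_2 \le \cdots$ with $D(\beta_n) \to 0$. Setting $\gamma = \sup_n \beta_n$, the monotonicity estimate gives $D(\gamma) \le 2 D(\beta_n) \to 0$, whence $D(\gamma)=0$, i.e.\ $g = g p_\gamma$. Taking $\alpha_0 = \gamma$ then completes the argument.
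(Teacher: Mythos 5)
Your proof is correct, and it takes a genuinely different route from the paper's. The paper derives the theorem from a standalone general lemma (Lemma~\ref{basic:lemma:evconst}): any map $f$ from a long $\omega$-cpo into a metrizable space satisfying $f(\sup\alpha_k)=\lim f(\alpha_k)$ along non-decreasing sequences is eventually constant. That lemma is proved by a four-step reduction (the case $M=[0,1]$ via the monotone envelopes $f_+,f_-$ and an eventual-constancy result for monotone maps cited from an earlier paper in the series, then discrete spaces, then hedgehogs $J(\lambda)$, then general $M$ via Kowalsky's embedding theorem), and the theorem follows by applying it to $f(\alpha)=gp_\alpha$ in the uniform metric space $(\mathscr{C}(X,M),\bar d)$, where the extra hypothesis $p_\alpha p_\beta=p_\alpha$ enters exactly once, to upgrade the compact-open convergence from (S4) to uniform convergence on all of $X$ via the identity $gp_{\alpha_k}=gp_{\alpha_k}p_\gamma$. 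You instead scalarize from the outset with the defect $D(\alpha)=\sup_{x\in X}d(g(x),gp_\alpha(x))$, using the extra hypothesis for the quasi-monotonicity $D(\beta)\le 2D(\alpha)$; your $\omega$-chain contradiction is sound in every detail --- $p_{\alpha_{k+1}}(x_k)=x_k$ because $p_{\alpha_{k+1}}$ retracts onto $K_{\alpha_{k+1}}\ni x_k$, the witnesses lie in $K_\gamma$ by (S2), $\delta_k\to 0$ because post-composition with $g$ is compact-open continuous and compact-open convergence gives uniform convergence on the compact set $K_\gamma$ --- and the final passage to the supremum, where $D(\gamma)\le 2D(\beta_n)\to 0$ forces $D(\gamma)=0$, is likewise correct (with a bounded metric, $D(\gamma)=0$ does give $g=gp_\gamma$ pointwise). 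Incidentally, in choosing $\alpha_{k+1}$ plain directedness of $\mathbb{I}$ already suffices; longness is not needed there. What your argument buys is elementarity and self-containment: no hedgehog spaces, no Kowalsky embedding, no appeal to external results on monotone maps of long posets. What the paper's route buys is a reusable lemma of independent interest and the formally stronger intermediate statement that $\alpha\mapsto gp_\alpha$ is eventually constant --- though your conclusion recovers this, since $g=gp_\gamma$ gives $gp_\alpha=gp_\gamma p_\alpha=gp_\gamma=g$ for all $\alpha\ge\gamma$ by the extra hypothesis. One cosmetic point: you use $\alpha_0$ both for the index in the theorem's conclusion and for the base of your witness chain; rename one of them.
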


In order to prove Theorem~\ref{basic:thm:evalpha}, we need a lemma.

\begin{lemma} \label{basic:lemma:evconst}
Let $(\mathbb{I}, \leq)$ be a long $\omega$-cpo, and let $M$ be a metrizable space.
Suppose that $f\colon \mathbb{I}\rightarrow M$ is a map such that
$f(\sup \alpha_k) = \lim f(\alpha_k)$
whenever $\{\alpha_k\}$ is a non-decreasing sequence in
$\mathbb{I}$. Then $f$ is eventually constant, that is,
there is $\alpha_0 \in \mathbb{I}$ such that
$f(\beta)=f(\alpha_0)$ for every $\beta \geq \alpha_0$.
\end{lemma}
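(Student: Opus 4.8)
The plan is to argue at the level of a fixed metric and to reduce everything to an \emph{eventual $\varepsilon$-constancy} statement. Fix a metric $d$ on $M$ inducing its topology. The key intermediate claim is: for every $\varepsilon > 0$ there is an index $\alpha_\varepsilon \in \mathbb{I}$ such that $d(f(\beta), f(\gamma)) < \varepsilon$ for all $\beta, \gamma \geq \alpha_\varepsilon$. Granting this claim, the conclusion follows quickly. Applying it with $\varepsilon = 1/n$ produces a countable family $\{\alpha_{1/n}\}_{n<\omega}$, and since $\mathbb{I}$ is long (hence $\aleph_0$-long), this family has an upper bound $\alpha_0 \in \mathbb{I}$. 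For any $\beta, \gamma \geq \alpha_0$ we then have $d(f(\beta), f(\gamma)) < 1/n$ for every $n$, whence $f(\beta) = f(\gamma)$; taking $\gamma = \alpha_0$ shows that $f$ is constant on $\{\beta \in \mathbb{I} : \beta \geq \alpha_0\}$, as required.

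The main work, and the step I expect to be the principal obstacle, is establishing the $\varepsilon$-constancy claim. I will prove its contrapositive: if no such $\alpha_\varepsilon$ exists, then for every $\alpha \in \mathbb{I}$ there are $\beta, \gamma \geq \alpha$ with $d(f(\beta), f(\gamma)) \geq \varepsilon$. I will use this repeatedly to build an auxiliary non-decreasing sequence $\alpha_0 \leq \alpha_1 \leq \cdots$ together with witnesses $\beta_k, \gamma_k$ satisfying $\alpha_k \leq \beta_k, \gamma_k \leq \alpha_{k+1}$ and $d(f(\beta_k), f(\gamma_k)) \geq \varepsilon$: starting from any $\alpha_0$ (note $\mathbb{I}$ is nonempty, since $\emptyset$ has an upper bound), choose witnesses $\beta_0, \gamma_0 \geq \alpha_0$, let $\alpha_1$ be an upper bound of $\{\beta_0, \gamma_0\}$ (available by directedness), and iterate.

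The delicate point is that the two witnesses $\beta_k$ and $\gamma_k$ need not be comparable, so they cannot simply be listed consecutively in a single monotone sequence. To get around this I will interleave them with the $\alpha_k$: both $\alpha_0 \leq \beta_0 \leq \alpha_1 \leq \beta_1 \leq \cdots$ and $\alpha_0 \leq \gamma_0 \leq \alpha_1 \leq \gamma_1 \leq \cdots$ are non-decreasing (because $\alpha_k \leq \beta_k, \gamma_k \leq \alpha_{k+1}$), and each is mutually cofinal with $\{\alpha_k\}$, so the three share the common supremum $\gamma^* := \sup_k \alpha_k$, which exists because $\mathbb{I}$ is an $\omega$-cpo. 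Applying the hypothesis $f(\sup \alpha_k) = \lim f(\alpha_k)$ to these two interleaved sequences, whose images converge to $f(\gamma^*)$, and passing to the subsequences indexed by $\beta_k$ and $\gamma_k$, yields $f(\beta_k) \to f(\gamma^*)$ and $f(\gamma_k) \to f(\gamma^*)$. Then the triangle inequality gives $\varepsilon \leq d(f(\beta_k), f(\gamma_k)) \leq d(f(\beta_k), f(\gamma^*)) + d(f(\gamma^*), f(\gamma_k)) \to 0$, a contradiction. This establishes the claim and completes the proof.
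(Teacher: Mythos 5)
Your proof is correct, but it takes a genuinely different route from the paper's. The paper proceeds by a four-step reduction on the target space: first $M=[0,1]$, using the monotone envelopes $f_+(\alpha)=\sup_{\gamma\geq\alpha}f(\gamma)$ and $f_-(\alpha)=\inf_{\gamma\geq\alpha}f(\gamma)$ together with a stabilization result for monotone maps on long posets quoted from an earlier paper in the series, plus an alternating sequence to force $f_+=f_-$; then discrete $M$; then the hedgehog $J(\lambda)$; and finally arbitrary metrizable $M$ via Kowalsky's embedding $M\hookrightarrow J(\lambda)^{\omega}$, with longness used to bound the countably many coordinate-wise stabilization indices. You instead fix one compatible metric and prove a Cauchy-type eventual $\varepsilon$-constancy claim by contradiction, and your interleaving device $\alpha_k\leq\beta_k,\gamma_k\leq\alpha_{k+1}$ correctly handles the key obstacle that the two witnesses need not be comparable: the two interleaved monotone sequences are mutually cofinal with $\{\alpha_k\}$, hence have the same least upper bound $\gamma^*$ in the $\omega$-cpo, and the hypothesis---read, as the paper also reads it in its Step 2, as asserting that $\lim f(\alpha_k)$ exists and equals $f(\sup\alpha_k)$---forces $f(\beta_k)\to f(\gamma^*)$ and $f(\gamma_k)\to f(\gamma^*)$, contradicting $d(f(\beta_k),f(\gamma_k))\geq\varepsilon$. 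The concluding step is also sound: $\aleph_0$-longness bounds $\{\alpha_{1/n}\}_{n<\omega}$, and since $d$ is a genuine metric, $d(f(\beta),f(\gamma))<1/n$ for all $n$ gives equality. What each approach buys: yours is more elementary and self-contained, needing neither Kowalsky's hedgehog theorem nor the external monotone-stabilization lemma, and using metrizability only through the choice of one metric; the paper's is modular, isolating the real-valued and discrete cases, which echo techniques recurring in this series of papers. One cosmetic remark: you use $\alpha_0$ both for the base point of the auxiliary sequence in the contrapositive step and for the final stabilization index, so one of the two should be renamed.
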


\begin{proof}
{\itshape Step 1.} Suppose that $M=[0,1]$ equipped with the real topology.
Put $f_+(\alpha)\coloneqq \sup\limits_{\gamma \geq \alpha} f(\gamma)$
and $f_-(\alpha)\coloneqq \inf\limits_{\gamma \geq \alpha} f(\gamma)$.
Since $f_+$ and $f_-$ are monotone and $\mathbb{I}$ is long, there is
$\alpha_0\in \mathbb{I}$ such that $f_+(\beta) = f_+(\alpha_0)$ and
$f_-(\beta) = f_-(\alpha_0)$ for every $\beta \geq \alpha_0$
(\cite[2.10]{RDGL1}). Put $a \coloneqq f_-(\alpha_0)$ and
$b \coloneqq f_+(\alpha_0)$. Clearly, $a\leq b$, and it suffices to show that $a=b$.

We define $\{\alpha_k\}_{k < \omega}$ recursively as follows. 
We pick $\alpha_{k+1} \geq \alpha_k$ such that $f(\alpha_{k+1}) > b - \frac{1}{k+1}$
if $k$ is even and $f(\alpha_{k+1}) < a + \frac{1}{k+1}$ if $k$ is odd. 
(Such $\alpha_{k+1}$
exists, because $f_-(\alpha_k) =a$ and $f_+(\alpha_k)=b$.)
Since $\mathbb{I}$ is an $\omega$-cpo, $\sup \alpha_k$ exists, and by our assumption,
$\lim f(\alpha_k) = f(\sup \alpha_k)$ also exists. Therefore, $b \leq a$, 
and $f(\beta) = a$ for every $\beta \geq \alpha_0$.

{\itshape Step 2.} Suppose that $M$ is a discrete space. Assume that $f$ is
not eventually constant. Then for every $\beta \in \mathbb{I}$ there is 
$\alpha \geq \beta$ such that $f(\alpha) \neq f(\beta)$. Thus,
we can construct recursively a non-decreasing sequence
$\{\alpha_k\}_{k < \omega}$ such that $f(\alpha_{k+1}) \neq f(\alpha_k)$.
Since $\mathbb{I}$ is an $\omega$-cpo, $\sup \alpha_k$ exists, and by our assumption,
$f(\sup \alpha_k) = \lim f(\alpha_k)$; however, $\lim f(\alpha_k)$ does not exist,
because $M$ is discrete and  $f(\alpha_{k+1}) \neq f(\alpha_k)$ for every $k$. 
This contradiction shows that $f$ is eventually constant. 

{\itshape Step 3.} Suppose that $M=J(\lambda)$, the hedgehog space of weight
$\lambda$, obtained by gluing together $\lambda$ many copies of $[0,1]$ at the point $0$,
equipped with the metric topology (see \cite[4.1.5]{Engel}). Let $d$ denote
the metric on $J(\lambda)$, and define $g\colon \mathbb{I}\rightarrow [0,1]$
by putting $g(\alpha) = d(f(\alpha),0)$. Since the metric is continuous,
$g(\sup \alpha_k) = \lim g(\alpha_k)$
whenever $\{\alpha_k\}$ is a non-decreasing sequence in
$\mathbb{I}$. Thus, by Step 1, there is $\alpha_0 \in \mathbb{I}$ such that
$g(\beta)=g(\alpha_0)$ for every $\beta \geq \alpha_0$. Put 
$r\coloneqq g(\alpha_0) = d(f(\alpha_0),0)$.
Let $D = \{x \in J(\lambda) \mid d(x,0) =r\}$. Then $D$ is discrete and
$f(\beta) \in D$ for every $\beta \geq \alpha_0$. Thus,  by
Step 2, $f$~is eventually constant.

{\itshape Step 4.} Let $M$ be any metrizable space. Put $\lambda = w(M)$.
By Kowalsky's Hedgehog Theorem, 
$M$ embeds as a subspace into $J(\lambda)^\omega$ (see \cite[4.4.9]{Engel}).
Let $\pi_n \colon J(\lambda)^\omega \rightarrow J(\lambda)$ denote the $n$-th projection.
By Step 3, each map $\pi_n f$ is eventually constant, and so there is $\alpha_n \in \mathbb{I}$
such that $\pi_n f(\beta) = \pi_n f(\alpha_n)$ for every $\beta \geq \alpha_n$.
Since $\mathbb{I}$ is long, there is $\gamma \in \mathbb{I}$ such that 
$\gamma \geq \alpha_n$ for every $n$. Therefore, one has $\pi_n f(\beta) = \pi_n f(\gamma)$
for every $\beta \geq \gamma$ and every $n$, that is, $f(\beta) = f(\gamma)$, as desired.
\end{proof}

We are now ready to prove Theorem~\ref{basic:thm:evalpha}.

\begin{proof}[Proof of Theorem~\ref{basic:thm:evalpha}.]
Let $d$ be a metric on $M$, and let $\bar d$ denote the associated
uniform metric on the function space $\mathscr{C}(X,M)$.
Consider $f\colon \mathbb{I} \rightarrow (\mathscr{C}(X,M),\bar d)$ defined by
$f(\alpha) =  gp_\alpha$. We show that $f$ satisfies the condition
of Lemma~\ref{basic:lemma:evconst}.

Let $\{\alpha_k\}$ be a non-decreasing sequence in $\mathbb{I}$, and put
$\gamma = \sup \alpha_k$. By property (S4), $\lim\limits_k p_{\alpha_k} = p_\gamma$
in $\mathscr{C}(X,K_\gamma)\subseteq \mathscr{C}(X,X)$ (where the latter is equipped with the compact-open topology).
Since composition is continuous with respect to the compact-open topology,
$\lim\limits_k g p_{\alpha_k} = g p_\gamma$ in $\mathscr{C}(X,M)$, and in particular,
$g p_{\alpha_k}$ converges uniformly to $g p_{\gamma}$ on $K_\gamma$. Therefore,
$f(\alpha_k) = g p_{\alpha_k} = g p_{\alpha_k} p_\gamma$ converges uniformly to
$g p_\gamma p_\gamma = g p_\gamma = f(\gamma)$. This shows that
$\lim f(\alpha_k) = f(\lim \alpha_k)$ in the metric space $(\mathscr{C}(X,M),\bar d)$.
Hence, by Lemma~\ref{basic:lemma:evconst}, there is $\alpha_0 \in \mathbb{I}$ such that
$f(\alpha) = f(\alpha_0)$ for every $\alpha \geq \alpha_0$.

Lastly, let $x \in X$. There is $\alpha \in \mathbb{I}$ such that $x \in K_{\alpha}$.
Since $\mathbb{I}$ is directed, without loss of generality, we may assume that
$\alpha \geq \alpha_0$. Thus,
\begin{align}
g(x) = g(p_\alpha(x)) = f(\alpha)(x) = f(\alpha_0)(x) = g(p_{\alpha_0}(x)),
\end{align}
as desired.
\end{proof}




\section{Homeomorphism groups of spaces with socks}

\label{sect:socks}

In  this section, we prove Theorems~\ref{main:thm:product} and~\ref{main:thm:sock}.
Theorem~\ref{main:thm:sock} combined with Corollary~\ref{basic:cor:long} yields
Theorem~\ref{main:thm:product}, so it suffices to prove Theorem~\ref{main:thm:sock}.
We start off with a well-known lemma, whose proof is provided here only for the sake
of completeness.

\begin{lemma}   \label{socks:lemma:tightness}
If $C$ and $K$ are compact spaces, then $t(\mathscr{C}(C,K)) \leq w(K)$.
\end{lemma}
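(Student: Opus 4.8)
The plan is to reduce the tightness estimate to an estimate on the \emph{character} of the function space, and to obtain the latter from the uniform structure of $K$. First I would invoke the fact that, since $C$ is compact, the compact-open topology on $\mathscr{C}(C,K)$ coincides with the topology of uniform convergence with respect to the unique uniformity $\mathcal{U}$ of the compact Hausdorff space $K$ (cf.~\cite{Engel}). Under this description a neighbourhood base at a map $f$ is given by the sets $W_U(f) = \{g \in \mathscr{C}(C,K) \mid (f(x),g(x)) \in U \text{ for all } x \in C\}$, where $U$ ranges over the entourages of $K$. The decisive feature is that these neighbourhoods are indexed by entourages of $K$ alone, with no reference to the compact subsets of $C$.

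Next I would bound the number of entourages needed. Writing $\lambda = w(K)$, the compact Hausdorff space $K$ embeds as a (closed) subspace of the Tychonoff cube $[0,1]^\lambda$, whose product uniformity has a base of entourages indexed by pairs $(F,n)$ with $F \in [\lambda]^{<\omega}$ and $n < \omega$; there are only $\lambda$ such pairs. Their traces on $K$ form a base for the subspace uniformity, which by compactness is $\mathcal{U}$ itself, so $\mathcal{U}$ has a base $\mathcal{B}$ with $|\mathcal{B}| \leq \lambda$. Since $W_U(f) \subseteq W_{U'}(f)$ whenever $U' \subseteq U$, the family $\{W_U(f) \mid U \in \mathcal{B}\}$ is a neighbourhood base at $f$ of cardinality at most $\lambda$. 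Hence the character of $\mathscr{C}(C,K)$ at every point is at most $w(K)$.

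Finally I would use the elementary inequality $t(Z) \leq \chi(Z)$, valid for any space $Z$: if $p \in \overline{A}$ and $\{N_i\}_{i \in I}$ is a neighbourhood base at $p$, then choosing $a_i \in A \cap N_i$ for each $i$ yields a set $B = \{a_i\}_{i \in I} \subseteq A$ with $|B| \leq |I|$ and $p \in \overline{B}$. Applying this with $|I| \leq w(K)$ gives $t(\mathscr{C}(C,K)) \leq w(K)$, as desired. I do not expect a genuine obstacle here, since the argument is routine once the two standard identifications are in place; the only points requiring care are the coincidence of the compact-open and uniform-convergence topologies (which relies on compactness of $C$) and the fact that the unique uniformity of $K$ admits a base of size $w(K)$ rather than the a priori larger weight of $K \times K$. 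The convention $w(K) = \aleph_0$ for finite $K$ makes the bound hold trivially in that degenerate case.
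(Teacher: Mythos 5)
Your proof is correct and takes essentially the same route as the paper's: identify the compact-open topology on $\mathscr{C}(C,K)$ with the topology of uniform convergence (using compactness of $C$), bound the character at each point by the cardinality of a base of entourages of $K$, and conclude via $t \leq \chi$. The only difference is cosmetic: where the paper simply cites the standard fact that the unique uniformity of a compact space $K$ admits an entourage base of cardinality at most $w(K)$ (Engelking, 8.3.13), you derive it directly by embedding $K$ into the cube $[0,1]^{w(K)}$ and restricting a base of the product uniformity.
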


\begin{proof}
Since $K$ is compact, its uniformity admits a base $\{U_i\}_{i\in I}$ of entourages
of the diagonal such that $|I| \leq w(K)$ (\cite[8.3.13]{Engel}).
The compact-open topology on $\mathscr{C}(C,K)$ is induced by uniform convergence
on $C$ (\cite[8.2.7]{Engel}), and has a base of the form $\{\widetilde U_i\}_{i\in I}$, 
consisting of entourages of the diagonal of the form
\begin{align}
\widetilde U_i \coloneqq \{(f,g) \in \mathscr{C}(C,K) \times \mathscr{C}(C,K) \mid
(f(x),g(x)) \in U_i \text{ for all } x \in C \}.
\end{align}
In particular, the topology of $\mathscr{C}(C,K)$ has a base of cardinality 
at most $|I|$ at each point, and thus the {\itshape character}
$\chi(\mathscr{C}(C,K))$ of the space $\mathscr{C}(C,K)$
is at most $w(K)$. Therefore,
\begin{align}
t(\mathscr{C}(C,K)) \leq \chi(\mathscr{C}(C,K)) \leq  w(K),
\end{align}
as desired.
\end{proof}

Recall that we denote by $\supp f \coloneqq \operatorname{cl}_X\{x\in X \mid f(x)\neq x\}$
the support of a homeomorphism of a space $X$.

\begin{Ltheorem*}[\ref{main:thm:sock}]
Let $\kappa$ be an infinite cardinal. Suppose that $X$ is a locally
compact space that has a $\kappa$-sock  and $kw(X) \leq \kappa$. Then $X$ has CSHP and
$\homeo_{cpt}(X)$ is $\kappa$-tight.
\end{Ltheorem*}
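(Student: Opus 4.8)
The plan is to realise $G\coloneqq\homeo_{cpt}(X)$ as the increasing union of the subgroups $G_\alpha\coloneqq\homeo_{K_\alpha}(X)$, $\alpha\in\mathbb{I}$, to compare the colimit topology $\tau_c$ with the topology $\tau_\beta$ inherited from $\homeo(\beta X)$, and to run every estimate through the key consequence of $\kappa$-longness: since each $h\in G$ satisfies $\supp h\subseteq K_{\alpha(h)}$ for some $\alpha(h)$ by (S3), any subset of $G$ of cardinality $\le\kappa$ lies in a single stage $G_\delta$ (take $\delta$ above the $\le\kappa$ indices $\alpha(h)$). First I would record the facts common to both topologies. Restricting a homeomorphism supported in $K_\alpha$ to $K_\alpha$ embeds $G_\alpha$ into $\homeo(K_\alpha)\subseteq\mathscr{C}(K_\alpha,K_\alpha)$, and as $w(K_\alpha)\le kw(X)\le\kappa$, Lemma~\ref{socks:lemma:tightness} makes $G_\alpha$ $\kappa$-tight. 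Both $\tau_c$ and $\tau_\beta$ restrict to the compact-open topology on each $G_\alpha$ (for $\tau_\beta$, because such a homeomorphism extends to $\beta X$ fixing $\beta X\setminus K_\alpha$, so uniform convergence on $\beta X$ is uniform convergence on $K_\alpha$), each $G_\alpha$ is closed in both (it is cut out by the closed conditions ``fixes $\beta X\setminus K_\alpha$ pointwise''), and $\tau_c\supseteq\tau_\beta$ since each inclusion $G_\alpha\hookrightarrow\homeo(\beta X)$ is continuous while $\tau_c$ is the colimit.

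Next I would prove the colimit closure formula $\overline{A}^{\tau_c}=\bigcup_\alpha\overline{A\cap G_\alpha}^{G_\alpha}\eqqcolon S$. The content is that $S$ is $\tau_c$-closed, i.e.\ $S\cap G_\beta$ is closed in $G_\beta$ for each $\beta$. Given $g$ in its $G_\beta$-closure, $\kappa$-tightness of $G_\beta$ yields $C\subseteq S\cap G_\beta$ with $|C|\le\kappa$ and $g\in\overline{C}^{G_\beta}$; choosing $\delta$ above $\beta$ and above the $\le\kappa$ indices attached to members of $C$, each relevant $\overline{A\cap G_{\alpha_c}}^{G_{\alpha_c}}\subseteq\overline{A\cap G_\delta}^{G_\delta}$ (as $G_{\alpha_c}$ is closed in $G_\delta$), whence $C\subseteq\overline{A\cap G_\delta}^{G_\delta}$ and so $g\in\overline{A\cap G_\delta}^{G_\delta}\subseteq S$. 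From this formula and the $\kappa$-tightness of the stages, $(G,\tau_c)$ is $\kappa$-tight: a point of $\overline{A}^{\tau_c}$ lies in some $\overline{A\cap G_\alpha}^{G_\alpha}$, hence in the $\tau_c$-closure of a $\le\kappa$-subset of $A$.

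The same stage-wise bookkeeping shows $\tau_c$ and $\tau_\beta$ agree on closures of small sets: if $|B|\le\kappa$ then $B\subseteq G_\delta$ for one $\delta$, and for every $\gamma'\ge\delta$ one has $\overline{B}^{\tau_\beta}\cap G_{\gamma'}=\overline{B}^{G_{\gamma'}}\subseteq S=\overline{B}^{\tau_c}$ (the first equality because $\tau_\beta|_{G_{\gamma'}}$ is the topology of $G_{\gamma'}$); taking the union over the cofinal set of such $\gamma'$ gives $\overline{B}^{\tau_\beta}=\overline{B}^{\tau_c}$. Consequently, \emph{once we know $(G,\tau_\beta)$ is $\kappa$-tight}, CSHP is immediate: for arbitrary $A$,
\[ \overline{A}^{\tau_\beta}=\bigcup_{B\subseteq A,\ |B|\le\kappa}\overline{B}^{\tau_\beta}=\bigcup_{B}\overline{B}^{\tau_c}\subseteq\overline{A}^{\tau_c}\subseteq\overline{A}^{\tau_\beta}, \]
so $\tau_c$ and $\tau_\beta$ have the same closed sets, and the $\kappa$-tightness of $\homeo_{cpt}(X)$ is then exactly the hypothesis we assumed.

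The main obstacle is therefore the single remaining claim that $(\homeo_{cpt}(X),\tau_\beta)$ is $\kappa$-tight, and this is where local compactness and the full force of the sock must enter, since the ambient group $\homeo(\beta X)$ carries no tightness bound expressible in terms of $\kappa$. Here I would work in the topological group $(G,\tau_\beta)$, translate the limit point to the identity, and exploit that $\tau_\beta$-nearness of a compactly supported $h$ to the identity is tested by the bounded continuous functions on $X$, each of which---by Theorem~\ref{basic:thm:evalpha}, equivalently by the sequential continuity (S4)---is already determined on a single compact $K_\gamma$ of weight $\le\kappa$. This reduces nearness at each level $\gamma$ to the compact-open topology on $\mathscr{C}(K_\gamma,K_\gamma)$, whose character is $\le\kappa$ by Lemma~\ref{socks:lemma:tightness}; a recursion along a countable chain in the $\omega$-cpo, in the spirit of the proof of Lemma~\ref{basic:lemma:evconst}, together with the support reduction that packs every $\le\kappa$-chunk into one stage, should extract from any witnessing family a subfamily of size $\le\kappa$ whose $\tau_\beta$-closure still contains the identity. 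I expect the delicate point to be organising this extraction so that the (possibly uncountable) collection of level-$\gamma$ constraints is met by one $\le\kappa$-sized set rather than by a family growing with $\gamma$; the $\kappa$-long support bound is precisely what should keep the construction from diverging.
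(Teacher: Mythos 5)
Your reduction machinery is sound: the stage-wise facts about $G_\alpha=\homeo_{K_\alpha}(X)$, the closure formula $\overline{A}^{\tau_c}=\bigcup_\alpha\overline{A\cap G_\alpha}^{G_\alpha}$, the agreement of $\tau_c$- and $\tau_\beta$-closures on sets of size $\le\kappa$, and the deduction of CSHP from $\kappa$-tightness of $(G,\tau_\beta)$ are all correct, and together they form a self-contained substitute for the result \cite[2.3]{RDGL1} that the paper invokes at exactly this juncture. But what you call ``the single remaining claim'' --- that $(\homeo_{cpt}(X),\tau_\beta)$ is $\kappa$-tight --- is the entire content of the paper's proof of Theorem~\ref{main:thm:sock}, and your final paragraph does not prove it: it ends in ``should extract'' and ``I expect the delicate point to be\dots''. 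Moreover, the two tools you name there are not usable as stated. Theorem~\ref{basic:thm:evalpha} carries the additional hypothesis $p_\alpha p_\beta=p_\alpha$ for $\alpha\le\beta$, which a general $\kappa$-sock need not satisfy (the paper's proof of Theorem~\ref{main:thm:sock} never uses that theorem); and reducing nearness to the compact-open topology of $\mathscr{C}(K_\gamma,K_\gamma)$ fails because the elements of $A$ witnessing $\operatorname{id}_X\in\overline{A}$ may have supports not contained in any fixed $K_\gamma$, so restriction to $K_\gamma$ is not even a well-defined self-map on $A$ --- precisely the divergence problem you flag but do not resolve.

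The mechanism your sketch is missing is that one never restricts, but \emph{post-composes}. For a stage $\alpha$, the map $(\beta p_\alpha)^*\colon\mathscr{C}(\beta X,\beta X)\to\mathscr{C}(\beta X,K_\alpha)$, $h\mapsto\beta p_\alpha h$, is continuous and defined on all of $A$; from $\operatorname{id}_X\in\overline{A}$ one gets $\beta p_\alpha\in\overline{(\beta p_\alpha)^*(A)}$, and Lemma~\ref{socks:lemma:tightness} applied with $C=\beta X$ (so $t(\mathscr{C}(\beta X,K_\alpha))\le w(K_\alpha)\le kw(X)\le\kappa$) extracts $E\subseteq A$ with $|E|\le\kappa$ and $\beta p_\alpha\in\overline{(\beta p_\alpha)^*(E)}$. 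One then interleaves a recursion: choose $\alpha_{m+1}\ge\alpha_m$ above all ($\le\kappa$ many) supports of the previously selected $D_m$ (this is where $\kappa$-longness enters), extract $E_m$ at stage $\alpha_{m+1}$, set $D_{m+1}=D_m\cup E_m$, and take $\gamma=\sup_m\alpha_m$ in the $\omega$-cpo. Property (S4) gives $p_{\alpha_m}\to p_\gamma$ uniformly on $K_\gamma$, and for any pseudometric $d$ on $\beta X$ and $\varepsilon>0$ a three-term triangle estimate --- using that each chosen $f$ satisfies $\supp f\subseteq K_{\alpha_{m+1}}\subseteq K_\gamma$, hence $f(K_\gamma)\subseteq K_\gamma$ and $f=\operatorname{id}$ off $K_\gamma$ --- produces $f\in D\coloneqq\bigcup_m D_m$ with $d(f(x),x)<\varepsilon$ for all $x$. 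The crucial point is that the single countable chain $\{\alpha_m\}$ and the single set $D$, of size $\le\kappa$, are fixed \emph{before} and independently of $d$ and $\varepsilon$; this is exactly what meets your ``possibly uncountable collection of level-$\gamma$ constraints'' with one $\le\kappa$-sized set. Without this construction your proof is incomplete at its central step.
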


\begin{proof}
We first show that $\homeo_{cpt}(X)$ equipped with the topology induced
by $\homeo(\beta X)$ is $\kappa$-tight.
Let  $A \subseteq\homeo_{cpt}(X)$ and let $g \in \overline A$. Without loss of generality,
we may assume that $g=\operatorname{id}_X$. We construct $D\subseteq A$ such that
$|D|\leq \kappa$ and $\operatorname{id}_X \in \overline D$.

Let $D_{-1} \subseteq A$ be a non-empty subset such that $|D_{-1}| \leq \kappa$, and let $\alpha_{-1}\in\mathbb{I}$ be an arbitrary element. 
We construct inductively a non-decreasing family $\{D_m\}_{m < \omega}$ of 
subsets of $A$  containing $D_{-1}$ and a non-decreasing sequence  $\{\alpha_m\}_{m < \omega}$ in $\mathbb{I}$
such that for every $m \in \omega$:

\begin{enumerate}[label=(\Roman*),labelwidth=0.4in,leftmargin=0.4in]

\item 
$|D_m| \leq \kappa$;
\label{sock:thm:prop:D_m}

\item
$\supp f  
\subseteq K_{\alpha_m}$ for every $f\in D_{m-1}$; and
\label{sock:thm:prop:supp}

\item
$\beta p_{\alpha_{m}} \in \overline{(\beta p_{\alpha_{m}})^*(D_m)}$, where 
$(\beta p_{\alpha_{m}})^* \colon \mathscr{C}(\beta X,\beta X)\longrightarrow 
\mathscr{C}(\beta X, K_{\alpha_{m}})$ is the map defined by $h \mapsto \beta p_{\alpha_{m}} h$,
which is continuous (\cite[3.4.2]{Engel}).
\label{sock:thm:prop:closure}

\end{enumerate}

Suppose that  $D_m$ and $\alpha_m$ have been constructed.
Since the family $\{K_\alpha\}_{\alpha \in \mathbb{I}}$ is cofinal in $\mathscr{K}(X)$,
for every $f \in D_m$ there is $\alpha(f) \in \mathbb{I}$ such that
$\supp f \subseteq K_{\alpha(f)}$. One has
$|\{ \alpha(f) \mid f \in D_m\}| \leq |D_m| \leq \kappa$, and so
there is $\alpha_{m+1} \in \mathbb{I}$ such that $\alpha(f) \leq \alpha_{m+1}$
for every $f \in D_m$ and $\alpha_{m+1} \geq \alpha_{m}$, because $\mathbb{I}$
is $\kappa$-long. We observe that for every $f \in D_m$, one has
\begin{align}
    \supp f \subseteq K_{\alpha(f)} \subseteq K_{\alpha_{m+1}}.
\end{align}
One has 
\begin{align}
\beta p_{\alpha_{m+1}} = (\beta p_{\alpha_{m+1}})^*(\operatorname{id}_X) \in (\beta p_{\alpha_{m+1}})^*(\overline A)
\subseteq \overline{(\beta p_{\alpha_{m+1}})^*(A)}. \label{eq:pmalpha1}
\end{align}
By Lemma \ref{socks:lemma:tightness}, we have
\begin{align}
t(\mathscr{C}(\beta X, K_{\alpha_{m+1}}))\leq w(K_{\alpha_{m+1}}) \leq kw(X) \leq \kappa.
\end{align}
It follows from (\ref{eq:pmalpha1}) that there is $E_m \subseteq A$ such that
$|E_m| \leq \kappa$ and $\beta p_{\alpha_{m+1}} \in \overline{(\beta p_{\alpha_{m+1}})^*(E_m)}$. 
Put $D_{m+1} \coloneqq D_m \cup E_m$. It follows from the construction that $|D_{m+1}|\leq \kappa$.

Put $D \coloneqq \bigcup\limits_{m < \omega} D_m$ and $\gamma = \sup \alpha_m$. (Since
$\mathbb{I}$ is an $\omega$-cpo, $\gamma$ exists.)
It follows from {\ref{sock:thm:prop:D_m}} that $|D| \leq \aleph_0 \kappa =\kappa$.
We prove that $\operatorname{id}_X \in \overline D$ in $\homeo_{cpt}(X)$.
The topology of $\homeo_{cpt}(X)$  is induced by the topology
of $\mathscr{C}(\beta X,\beta X)$, which coincides with the uniform topology; 
as such, it is generated by pseudometrics of the form
\begin{align}
\bar d (f,g) \coloneqq \sup\{d(f(x),g(x)) \mid x \in X \},
\end{align}
where $d$ is a pseudometric in $\beta X$. Thus, it suffices to show
that for every pseudometric $d$ on $\beta X$ and $\varepsilon > 0$, there
is $f \in D$ such that
$d(f(x),x) < \varepsilon$ for every $x \in X$.

Let $d$ be a pseudometric on $\beta X$ and let $\varepsilon > 0$.
Since $\{p_\alpha\}_{\alpha \in \mathbb{I}}$ is  a $\kappa$-sock, 
one has $\lim p_{\alpha_m} = p_\gamma$ in 
$\mathscr{C}(X,K_{\gamma})$; in particular, $\{p_{\alpha_m}\}_{m < \omega}$ converges
uniformly to $p_\gamma$ on the compact set $K_\gamma$, and so
there is $m \in \omega$ such that for every $x \in K_\gamma$,
\begin{align}
d (p_{\alpha_m}(x) ,  x) = 
    d (p_{\alpha_m}(x) , p_\gamma(x)) < \frac \varepsilon 3.
    \label{eq:sock1}
\end{align}
By property {\ref{sock:thm:prop:closure}},  $\beta p_{\alpha_{m}} \in \overline{(\beta p_{\alpha_{m}})^*(D_m)}$
in $\mathscr{C}(\beta X, K_{\alpha_{m}})$, which also carries the uniform topology,
and so there is $f \in D_m$ such that 
\begin{align}
    d(p_{\alpha_m}(f(x)),p_{\alpha_m}(x))  = 
    d(((\beta p_{\alpha_{m}})^*(f))(x),(\beta p_{\alpha_{m}})(x)) < \frac \varepsilon 3
    \quad \text{ for every } x\in X.
     \label{eq:sock2}
\end{align}
By property {\ref{sock:thm:prop:supp}}, $\supp f \subseteq K_{\alpha_{m+1}} \subseteq K_\gamma$,
and so $f(x) \in K_\gamma$ for every $x \in K_\gamma$. Therefore, by
(\ref{eq:sock1}) applied to $f(x)$ instead of $x$, one has 
\begin{align}
d (p_{\alpha_m}(f(x)) ,  f(x))  < \frac \varepsilon 3
\quad \text{ for every } x\in K_\gamma.
    \label{eq:sock3}
\end{align}

Hence, by (\ref{eq:sock1})--(\ref{eq:sock3}), for every $x \in K_\gamma$,
\begin{align}
d(f(x),x) \leq 
d(f(x), p_{\alpha_m}(f(x))) + 
d(p_{\alpha_m}(f(x)),p_{\alpha_m}(x)) + 
d (p_{\alpha_m}(x) ,  x) < \varepsilon.
\end{align}
Since $f(x)=x$ for $x \in X\backslash K_\gamma$ and $f\in D_m \subseteq D$, this 
completes the proof that $\operatorname{id}_X \in \overline D$.
This shows that $\homeo_{cpt}(X)$ is $\kappa$-tight.

Lastly, we prove that $X$ has CSHP. To that end,
we show that the family $\{\homeo_K(X)\}_{K\in \mathscr{K}(X)}$ satisfies the conditions
of \cite[2.3]{RDGL1}. First, since $\{K_\alpha\}_{\alpha \in \mathbb{I}}$ is $\kappa$-long and
cofinal in $\mathscr{K}(X)$, it follows that $\mathscr{K}(X)$ itself is $\kappa$-long. 
Second, the inclusion $\homeo_K(X) \rightarrow \homeo_{cpt}(X)$ is an embedding for every $K\in\mathscr{K}(X)$. Third,  we have just shown 
that $t(\homeo_{cpt}(X)) \leq \kappa$.
Hence, by \cite[2.3]{RDGL1}, the topology of $\homeo_{cpt}(X)$ coincides
with the colimit space topology, as desired. 
\end{proof}




\section*{Acknowledgments}

We are grateful to Karen Kipper for her kind help in proofreading this paper for grammar and punctuation.

{\footnotesize

\bibliography{dahmen-lukacs}

}

\begin{samepage}

\bigskip
\noindent
\begin{tabular}{l @{\hspace{1.6cm}} l}
Rafael Dahmen						    & G\'abor Luk\'acs \\
Department of Mathematics				& Department of Mathematics and Statistics\\
Karlsruhe Institute of Technology		& Dalhousie University\\
D-76128 Karlsruhe					    & Halifax, B3H 3J5, Nova Scotia\\
Germany                			        & Canada\\
{\itshape rafael.dahmen@kit.edu}        & {\itshape lukacs@topgroups.ca}
\end{tabular}

\end{samepage}

\end{document}